\documentclass[sn-mathphys,Numbered]{sn-jnl}


\usepackage{graphicx}%
\usepackage{multirow}%
\usepackage{amsmath,amssymb,amsfonts}%
\usepackage{amsthm}%
\usepackage{mathrsfs}%
\usepackage[title]{appendix}%
\usepackage{xcolor}%
\usepackage{textcomp}%
\usepackage{manyfoot}%



\theoremstyle{thmstyleone}%
\newtheorem{theorem}{Theorem}
%

\theoremstyle{thmstyletwo}%
\newtheorem{example}{Example}%
\newtheorem{remark}{Remark}%

\theoremstyle{thmstylethree}%
\newtheorem{definition}{Definition}%

\theoremstyle{thmstyletwo}
\newtheorem{lemma}{Lemma}

\theoremstyle{thmstyletwo}
\newtheorem{corollary}{Corollary}

\raggedbottom

\begin{document}

\title[Approximation results on $s$-numbers of operators]{Approximation results on $s$-numbers of operators}


\author[]{\fnm{Deepesh} \sur{K. P.}}\email{deepeshkp@nitc.ac.in}



\affil[]{\orgdiv{Department of Mathematics}, \orgname{National Institute of Technology Calicut}, \orgaddress{\street{NIT Campus}, \city{Kozhikode}, \postcode{673601}, \state{Kerala}, \country{India}}}




\abstract{This article investigates the convergence properties of $s$-numbers of certain truncations of bounded linear operators between Banach spaces. We prove a generalized version of a known convergence result for the approximation numbers of truncations of operators, removing the restrictive assumption of separability from the underlying spaces. This generalization extends several existing results in the literature and establishes a close connection between two significant problems concerning approximation numbers. By exploring the relationships between approximation numbers and other prominent $s$-numbers, we also derive results on the convergence of $s$-numbers of truncations to those of the original operator, as applications of the generalized convergence result.
}

\keywords{Approximation numbers, $s$-numbers, weak$^*$ topology, truncation methods, complete symmetry.}


\pacs[MSC Classification]{Primary 47B06, Secondary 47A58}

\maketitle

\section{Introduction}\label{sec1}
Singular values and their extensions, commonly known as $s$-numbers, play a crucial role in both linear algebra and functional analysis. They are vital components of the theory of the \textit{geometry of Banach spaces}, as many operator ideals are constructed based on the summability properties of these numbers \cite{carl, bakery, hammou, pie}. Extensive research has been conducted to explore the relationships between various $s$-numbers and eigenvalues, leading to well-established results \cite{carl-weyl, carl-hinrich}. Beyond their theoretical significance, $s$-numbers have also found practical applications in fields such as image processing \cite{gregor}, numerical methods \cite{mathe}, and mathematical physics \cite{brownian}.

Among $s$-numbers, the concepts of approximation numbers, Hilbert numbers, Chang numbers, Weyl numbers, Kolmogorov numbers and Gelfand numbers are well known in the literature \cite{carl-weyl, carl, pie}. Their properties, estimates, extensions and importance in various fields form active areas of research \cite{dmk1, edmunds,hinrichs-carl,hinrichs,oikh,pie-4, prochno}. Of these $s$-numbers, the concept of approximation numbers stands out as the most extensive and well studied one. For a bounded linear operator $T$ from a normed linear space $X$ to a normed linear space $Y$, the $k^{\mbox{th}}$ approximation number is denoted by $a_k(T)$, and is defined for every natural number $k$ as 
$$a_k(T)=\inf\{\|T-F\|: F \in \mathcal{F}_k(X,Y)\},$$ 
where $\mathcal{F}_k(X,Y)$ represents the collection of all bounded linear, finite rank operators of rank less than $k$ from $X$ to $Y$. The pioneering works of Albrecht Pietsch on the axiomatic study of $s$-numbers have played a historic role in advancing this field \cite{pie,pie-2}.
 
For a quantity defined for bounded linear operators between infinite dimensional spaces, it is a standard mathematical quest to investigate the possibility of approximating this quantity using the corresponding values of some truncations (a sequence of operators that converges to the original operator in some suitable sense of convergence) of the operator.  In this article, we refer to this problem as ``the convergence problem" and we assume that the sense of convergence of truncations is weaker than the norm convergence.

This convergence problem was addressed in \cite{bcn} for approximation numbers of bounded linear operators between separable Hilbert spaces. In that work, the authors used standard truncations by means of orthogonal projections, which converge to the identity operator in the pointwise sense, to establish positive results. Subsequently, in \cite{dmk}, the authors established a convergence result for approximation numbers of bounded linear operators between normed linear spaces by employing appropriate truncations. However, it is important to note that certain restrictions were imposed in this work on the spaces involved, particularly the separability assumptions on the domain space and the predual of the codomain space. These constraints, although necessary for obtaining the results in \cite{dmk}, somewhat limited the applicability of the results obtained therein.

In this article, by using the concept of nets and their role in characterizing the weak* compactness of sets, we eliminate the separability assumptions previously imposed on both spaces mentioned above and achieve convergence results similar to those in \cite{dmk} under more general assumptions. The convergence result established for approximation numbers in this article leads to generalized versions of many results from \cite{dmk}. Notably, it establishes a correspondence between the convergence problem and the complete symmetry problem, which concerns the equality of approximation numbers of an operator and those of its adjoint operator (see \cite{hut}), in a broader context. 

As another application of the main convergence result, we consider other well-known $s$-numbers and prove certain convergence results for them. The convergence result obtained for Gelfand numbers highlights the significance of the generalized version of the convergence result obtained in this article, when compared to the corresponding result proved in \cite{dmk}.

In the next section, we introduce the fundamental notations, definitions, and known results that play a pivotal role in this article. In Section 3, we derive a generalized convergence result for approximation numbers and provide several consequent results, including the interrelation between the convergence problem and the complete symmetry problem concerning the approximation numbers of operators. Also we apply the generalized convergence result to obtain certain convergence results for other prominent $s$-numbers. We provide a simple example to illustrate the convergence result proved for Kolmogorov numbers, along with some potential directions for future work, in this section.

\section{Notations and Priliminaries}
We use the notations $X$, $Y$ and $Z$ to represent infinite dimensional normed linear spaces over $\mathbb{C}$, the set of all complex numbers. The notation $\mathbb{N}$ stands for the set of all natural numbers, and the closed unit ball of $X$ is referred to as $U_X$. The collection of all bounded linear operators from $X$ to $Y$ is denoted as $B(X, Y)$, and the abbreviations $B(X)$ and $X^{\prime}$ are used for $B(X, X)$ and $B(X, \mathbb{C})$, respectively. By a dual space, we mean a Banach space that is isometrically isomorphic to $Z^{\prime}$ for some normed linear space $Z$.
For every $k \in \mathbb{N}$, $\mathcal{F}_k(X, Y)$ denotes the set of all finite-rank operators in $B(X, Y)$  whose ranks are less than $k$. For an operator $T \in B(X, Y)$, the adjoint operator $T^{\prime}$ is defined as the operator in $B(Y^{\prime}, X^{\prime})$ given by
$$(T^{\prime}f)(x)=f(Tx), \,\,x\in X, \,\,f\in Y^{\prime}.$$ 
In addition to approximation numbers, various $s$-numbers are defined for operators in $B(X, Y)$ as generalizations of singular values. For a given operator $T \in B(X, Y)$ and for each $k \in \mathbb{N}$, the $k^{\mbox{th}}$ $s$-number (see \cite{pie-2}) is denoted as $s_k(T)$. We define some of the well-known $s$-numbers in the following (for more details, refer to \cite{carl, pie-2}).
\begin{itemize}
\item The $k^{\mbox{th}}$ Chang number of $T$ is
\begin{eqnarray*}
y_k(T)=\sup\left\{a_k(ST): S\in B(Y,\ell_2), \, \|S\|\leq 1\right\}.
\end{eqnarray*}
\item The $k^{\mbox{th}}$ Weyl number of $T$ is
\begin{eqnarray*}
x_k(T)=\sup\left\{a_k(TR): R \in B(\ell_2, X), \, \|R\|\leq 1\right\}.
\end{eqnarray*}
\item The $k^{\mbox{th}}$ Kolmogorov number of $T$ is
\begin{eqnarray*}
d_k(T)=\inf\left\{\|Q_N^Y T\|:\, N \mbox{ closed subspace of }Y,\, dim(N)<k\right\},
\end{eqnarray*}
where $Q_N^Y$ denotes the canonical surjection from the Banach space $Y$ on to the quotient space $Y/N$.
\item The $k^{\mbox{th}}$ Gelfand number of $T$ is
\begin{eqnarray*}
c_k(T)=\inf\left\{\|TJ_M^X\|:\, M \mbox{ subspace of }X,\, codim(M)<k\right\},
\end{eqnarray*}
where $\displaystyle J_M^X$ denotes the canonical injection from $M$ into the Banach space $X$.
\end{itemize}

We use a few concepts and results from topology.
A net in a set $S$ is any function defined from a directed set $(\mathcal{D}, \preceq)$ to $S$. A net's convergence is defined analogously to the convergence of a sequence. The following definition of a subnet is used in this article.

\begin{definition}\cite{joshy}\label{subnet}
Let $f:(\mathcal{D},\preceq)\to S$ be a net in $S$ and $g$ be a function from a directed set $(\mathcal{D}_1,\preceq_1)$ to $(\mathcal{D},\preceq)$ satisfying the conditions 
\begin{itemize}
\item[i.] for every $a \preceq_1 b$ in $\mathcal{D}_1$, $g(a) \preceq g(b)$ in $\mathcal{D}$.
\item[ii.] given $a\in \mathcal{D}$, there is a $\gamma\in \mathcal{D}_1$ such that for $b\in \mathcal{D}_1$ satisfying $\gamma \preceq b$ in $\mathcal{D}_1$, $a \preceq g(b)$.
\end{itemize}
Then $f \circ \,g : (\mathcal{D}_1, \preceq_1) \to S$ is called a subnet of $f$.
\end{definition}

 Note that the subnet of a convergent net is also convergent. The compactness of sets can be characterized by the convergence of subnets of nets.

\begin{theorem}\cite{joshy}\label{compact-net}
A subset $S$ of a topological space is compact if and only if every net in $S$ has a subnet which converges  in $S$.
\end{theorem}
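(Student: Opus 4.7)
The plan is to prove both directions by combining the finite-intersection-property (FIP) characterization of compactness with a standard neighborhood-filter construction of subnets.

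For the forward implication, let $f : \mathcal{D} \to S$ be a net in a compact set $S$. I would form the closed tails $T_\alpha = \overline{\{f(\beta) : \beta \succeq \alpha\}}$ and observe that $\{T_\alpha\}_{\alpha \in \mathcal{D}}$ has FIP: any finite subset $\alpha_1, \ldots, \alpha_n$ is dominated by some $\alpha_0 \in \mathcal{D}$ by directedness, and then $f(\alpha_0)$ lies in each $T_{\alpha_i}$. Compactness therefore furnishes a cluster point $x \in \bigcap_\alpha T_\alpha$. To extract a subnet converging to $x$ in the sense of Definition \ref{subnet}, I would let $\mathcal{N}_x$ denote the neighborhood filter at $x$ and set
$$\mathcal{D}_1 = \{(\alpha, U) \in \mathcal{D} \times \mathcal{N}_x : f(\alpha) \in U\},$$
ordered by $(\alpha_1, U_1) \preceq_1 (\alpha_2, U_2)$ iff $\alpha_1 \preceq \alpha_2$ and $U_2 \subseteq U_1$. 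The clustering property of $x$ together with $\mathcal{N}_x$ being a filter makes $\mathcal{D}_1$ directed, and setting $g(\alpha, U) = \alpha$ renders condition (i) of Definition \ref{subnet} automatic; condition (ii) also follows from clustering (given $\alpha_0$, pick any $V \in \mathcal{N}_x$ and $\beta \succeq \alpha_0$ with $f(\beta) \in V$, so $(\beta, V) \in \mathcal{D}_1$ serves as the threshold). The convergence $f \circ g \to x$ is then immediate from the defining constraint on $\mathcal{D}_1$.

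For the converse, I would argue contrapositively. If $S$ is not compact, FIP duality supplies a family $\{C_i\}_{i \in I}$ of closed subsets of $S$ with FIP but $\bigcap_i C_i = \emptyset$. Take $\mathcal{D}$ to be the collection of nonempty finite subsets of $I$, directed by inclusion, and choose $x_F \in \bigcap_{i \in F} C_i$ for each $F$. Suppose for contradiction that a subnet $(x_{g(\gamma)})_{\gamma \in \mathcal{D}_1}$ converges to some $x \in S$. Since the total intersection is empty, there is $i_0 \in I$ with $x \notin C_{i_0}$, so $S \setminus C_{i_0}$ is an open neighborhood of $x$ that eventually contains the subnet. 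However, condition (ii) of Definition \ref{subnet} applied to $\{i_0\} \in \mathcal{D}$ forces $g(\gamma) \supseteq \{i_0\}$ for all large $\gamma$, whence $x_{g(\gamma)} \in C_{i_0}$ eventually, a contradiction.

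The main obstacle I anticipate is verifying that the candidate $g$ in the forward implication satisfies the stricter monotonicity clause (i) of Definition \ref{subnet}, as opposed to the weaker Willard notion of subnet sometimes used in the literature. That is precisely why I constrain $\mathcal{D}_1$ by the condition $f(\alpha) \in U$ rather than using the naive product $\mathcal{D} \times \mathcal{N}_x$ with an arbitrary selection function: with the constraint built in, the projection $(\alpha, U) \mapsto \alpha$ inherits monotonicity automatically from the product order, and the cofinality check reduces cleanly to the cluster property of $x$. The remaining details are routine bookkeeping.
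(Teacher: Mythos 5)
The paper states this result as a citation from \cite{joshy} and gives no proof of its own, so there is nothing internal to compare against; your argument is the standard one found in textbook treatments (cluster point via the finite intersection property applied to closed tails, then the constrained product $\mathcal{D}_1=\{(\alpha,U): f(\alpha)\in U\}$ to build a monotone subnet, and FIP duality for the converse). It is correct, and you rightly flag and resolve the only delicate point, namely that the projection $g(\alpha,U)=\alpha$ must satisfy the monotonicity clause (i) of Definition \ref{subnet} rather than merely Kelley-style cofinality.
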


 The Banach-Alaoglu theorem is a well-known result in functional analysis.

\begin{theorem}(Banach-Alaoglu Theorem, \cite{limaye}) \label{Banach-Alauglu}
The closed unit ball of a dual space $Z^{\prime}$ is compact in the $weak^*$ topology. 
\end{theorem}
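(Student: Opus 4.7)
The plan is to embed the closed unit ball $U_{Z'}$ into a product of compact disks and apply Tychonoff's theorem. For each $z \in Z$, set $D_z = \{\lambda \in \mathbb{C} : |\lambda| \le \|z\|\}$, a compact subset of $\mathbb{C}$. Form the product $P = \prod_{z \in Z} D_z$ with the product topology; by Tychonoff's theorem, $P$ is compact.

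Next I would define the evaluation map $\Phi : U_{Z'} \to P$ by $\Phi(f) = (f(z))_{z \in Z}$. For $f \in U_{Z'}$ one has $|f(z)| \le \|f\|\,\|z\| \le \|z\|$, so $\Phi$ is well defined, and it is manifestly injective since a linear functional is determined by its values. A net $(f_\alpha)$ in $U_{Z'}$ converges to $f$ in the weak$^*$ topology if and only if $f_\alpha(z) \to f(z)$ for every $z \in Z$, which is exactly convergence of $\Phi(f_\alpha)$ to $\Phi(f)$ in the product topology. Consequently $\Phi$ is a homeomorphism onto its image, and it suffices to show that $\Phi(U_{Z'})$ is compact in $P$.

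The crucial step is to verify that $\Phi(U_{Z'})$ is closed in $P$. The image consists precisely of those tuples $(\lambda_z)_{z \in Z} \in P$ arising from linear functionals, which is the intersection, over all $z,w \in Z$ and all $\alpha \in \mathbb{C}$, of the sets
\[
A_{z,w} = \{(\lambda_u) \in P : \lambda_{z+w} = \lambda_z + \lambda_w\} \quad \text{and} \quad B_{z,\alpha} = \{(\lambda_u) \in P : \lambda_{\alpha z} = \alpha \lambda_z\}.
\]
Each coordinate projection $P \to \mathbb{C}$ is continuous in the product topology, so each $A_{z,w}$ and each $B_{z,\alpha}$ is closed, and hence so is their intersection. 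Thus $\Phi(U_{Z'})$ is a closed subset of the compact space $P$, hence compact, and transporting along the homeomorphism $\Phi$ shows that $U_{Z'}$ is compact in the weak$^*$ topology.

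The main obstacle is exactly this closedness of the image: one has to encode the linearity of elements of $Z'$ as a (large) family of closed conditions in the product topology. The norm bound is already built into the choice of the disks $D_z$, so once linearity is handled the argument is concluded by Tychonoff's theorem together with the homeomorphism property of $\Phi$. Using the characterization of compactness via subnets given in Theorem \ref{compact-net}, one could equivalently formulate the final step by extracting, from any net in $U_{Z'}$, a subnet whose coordinates converge and whose limit is again linear and norm-bounded by one.
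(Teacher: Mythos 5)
Your argument is the standard and complete Tychonoff-based proof of the Banach--Alaoglu theorem: the embedding $\Phi(f)=(f(z))_{z\in Z}$ into $\prod_{z\in Z}D_z$ is a homeomorphism onto its image for the weak$^*$ topology, and the image is closed because the linearity constraints are intersections of preimages of closed sets under continuous coordinate projections. The paper does not prove this statement at all --- it is quoted as a classical result from the cited textbook --- so there is nothing to compare against; your proof is correct and is precisely the textbook argument the citation points to.
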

 It is known that the weak$^*$ topology on a dual space $Z^{\prime}$ is not metrizable. However, the Banach-Alaoglu theorem, together with Theorem \ref{compact-net}, ensures the existence of weak$^*$ convergent subnets for every net in a closed ball of finite radius within a dual space.

\begin{remark}\label{wot}{\rm
Suppose $f:\mathbb{N}\to S$ is a sequence defined by $f(n)=a_n$ for $n\in \mathbb{N}$. Then the notation $(a_n)$ represents the sequence $f$. Similarly, a net $f:(\mathcal{D},\preceq) \to S$ such that $f(\alpha)=x_{\alpha}$ for $\alpha \in \mathcal{D}$ is denoted by $(x_{\alpha})$ in this article. Consequently, as a result of Theorem \ref{Banach-Alauglu}, for each net $f=(a_{\alpha})$ in $U_{Z^{\prime}}$, there exists a subnet $f\,\circ \,g:\mathcal{D}_1\to U_{Z^{\prime}}$ (where $\mathcal{D}_1$ is a directed set and $g :\mathcal{D}_1 \to \mathcal{D}$ is as defined in Definition \ref{subnet}), denoted by $(a_{\phi(\alpha)})$, which converges to some element $a\in U_{Z^{\prime}}$ in the sense that for each $\epsilon > 0$, there exists a $\beta \in \mathcal{D}$ such that for all $\alpha \in \mathcal{D}$ with $\beta \preceq \alpha$, $|a_{\phi(\alpha)}(z) - a(z)|<\epsilon$ for each $z\in Z$.}
\end{remark}

 A net of operators $(T_{\alpha})\subset B(X,Y)$ is said to converge to $T\in B(X,Y)$ in the 
\begin{itemize}
\item[(1)] \textbf{Norm topology} if $\|T_{\alpha}-T\|\to 0$.
\item[(2)] \textbf{Strong (or pointwise) topology} if for each $x\in X$, $\|T_{\alpha}x-Tx\|\to 0$.
\item[(3)] \textbf{Weak topology} if for each $f\in Y^{\prime}\mbox{ and } x\in X$, $|f(T_{\alpha}x)-f(Tx)|\to 0$.
\item[(4)] \textbf{Weak$^*$ topology} if $Y=Z^{\prime}$ and for each $x\in X,\, z\in Z$, $|T_{\alpha}x(z)-Tx(z)|\to 0$.
\end{itemize}
Note that the convergence of a net of operators here is as mentioned in Remark \ref{wot}.

\section{Main results}\label{main-theorem-6}
One of the interesting convergence problems concerning $s$-numbers focuses on the convergence of $s$-numbers of truncations of an operator between infinite dimensional spaces. Specifically, given an operator $T\in B(X, Y)$ and a natural number $k\in \mathbb{N}$, we consider a sequence of truncations $(T_n)$ of $T$ (where $T_n \to T$ as $n\to \infty$ in some suitable sense of operator convergence) and investigate whether $s_k(T_n)$ converges to $s_k(T)$ as $n\to \infty$. This problem has a direct positive answer when $T_n$ converges to $T$ in the norm due to the inequality 
\begin{align*}
|s_k(T_n)-s_k(T)|\leq \|T_n-T\|,
\end{align*}
which holds for all $s$-numbers (11.1.1, \cite{pie}). Therefore, the problem is worth investigating when $T_n$ converges to $T$ in a weaker form of convergence. 

As mentioned in the introduction, this problem was initially addressed in \cite{bcn} for the approximation numbers of an operator $T$ between infinite-dimensional separable Hilbert spaces. In their work, the authors examined truncations of the form $Q_nTP_n$, where  $P_n$ and $Q_n$ were assumed to be standard orthogonal projections that converge pointwise to the identity operator.
Subsequently, in \cite{dmk}, this problem was considered for approximation numbers of bounded linear operators between infinite-dimensional normed linear spaces and their specific truncations. The authors obtained convergence results under certain assumptions on the spaces involved, the truncations used, and the sense of operator convergence. In Theorem \ref{dmk-thm} below, we state the most general version of the convergence result proved in \cite{dmk}, in this regard.

\begin{theorem}\label{dmk-thm}(Theorem 3.3, \cite{dmk})
Let $X$ be a separable space, $Y$  be isometrically isomorphic to the dual space of a separable space $Z$, and $T\in B(X,Y)$. Let $(P_n)$ and $(Q_n)$ be sequences of operators in $B(X)$ and $B(Y)$ respectively such that for all $n\in\mathbb{N}$, $\|Q_n\|\|P_n\|\leq 1$ and $T_n := Q_nTP_n \to T$ as $n\to \infty$ in the weak* operator topology. Then for each $k\in\mathbb{N}$, 
$$\displaystyle\lim_{n\to \infty}a_k(T_n)=a_k(T).$$
\end{theorem}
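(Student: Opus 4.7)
The plan is to establish $\lim_{n\to\infty} a_k(T_n) = a_k(T)$ by proving the two inequalities $\limsup_n a_k(T_n) \le a_k(T)$ and $a_k(T) \le \liminf_n a_k(T_n)$ separately. The upper bound is immediate from the multiplicativity that characterizes $s$-numbers: since $T_n = Q_n T P_n$, one has $a_k(T_n) \le \|Q_n\|\, a_k(T)\, \|P_n\| \le a_k(T)$, which gives $\limsup_n a_k(T_n) \le a_k(T)$ at once. The real content is the reverse inequality, which I would prove by contradiction.

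Assume there exists a subsequence (still written $T_n$) with $a_k(T_n) \to \lambda < a_k(T)$. For each $n$, pick $F_n \in \mathcal{F}_k(X,Y)$ with $\|T_n - F_n\| \le a_k(T_n) + 1/n$; then $\|F_n\|$ is uniformly bounded. The key technical step is to normalize each $F_n$ so as to extract limits of its ``coordinates.'' Passing to a subsequence so that the rank $r_n = r$ is constant, apply Auerbach's lemma to $\mathrm{range}(F_n)$ to produce a representation
\begin{equation*}
F_n x \;=\; \sum_{i=1}^{r} f_i^{(n)}(x)\, y_i^{(n)}, \qquad \|y_i^{(n)}\|=1,\ \|f_i^{(n)}\| \le C,
\end{equation*}
with $C$ independent of $n$. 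Now invoke separability: since $X$ is separable, $U_{X^{\prime}}$ is weak$^*$ metrizable, and since $Z$ is separable, $U_Y = U_{Z^{\prime}}$ is weak$^*$ metrizable. Thus both balls are sequentially weak$^*$ compact, and by a diagonal extraction (finitely many indices $i$) I can pass to a further subsequence so that $f_i^{(n)} \xrightarrow{w^*} f_i$ in $X^{\prime}$ and $y_i^{(n)} \xrightarrow{w^*} y_i$ in $Y$ for every $i = 1,\dots,r$.

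Define $F \in B(X,Y)$ by $Fx = \sum_{i=1}^{r} f_i(x)\, y_i$; it has rank at most $r < k$. For any $x \in X$ and $z \in Z$,
\begin{equation*}
F_n x(z) \;=\; \sum_{i=1}^{r} f_i^{(n)}(x)\, y_i^{(n)}(z) \;\longrightarrow\; \sum_{i=1}^{r} f_i(x)\, y_i(z) \;=\; Fx(z),
\end{equation*}
while the weak$^*$ operator convergence $T_n \to T$ gives $T_n x(z) \to Tx(z)$. Combining these with the bound
\begin{equation*}
|T_n x(z) - F_n x(z)| \;\le\; \|T_n - F_n\|\, \|x\|\, \|z\|,
\end{equation*}
and letting $n \to \infty$ yields $|(T-F)x(z)| \le \lambda\, \|x\|\, \|z\|$. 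Taking the supremum over $z \in U_Z$ (which recovers the norm in $Y = Z^{\prime}$) and then over $x \in U_X$ produces $\|T - F\| \le \lambda < a_k(T)$, contradicting the definition of $a_k(T)$ since $F$ has rank less than $k$.

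The main obstacle is the normalization step producing bounded coordinate functionals $f_i^{(n)}$ and unit vectors $y_i^{(n)}$ with a common rank: without it, weak$^*$ limits could collapse the approximating operators and destroy the rank estimate. Auerbach's lemma is the crucial device that enables simultaneous weak$^*$ extraction in $X^{\prime}$ and in $Y$. The separability hypotheses on $X$ and $Z$ enter exactly here, in order to replace the general compactness of $U_{X^{\prime}}$ and $U_{Z^{\prime}}$ by \emph{sequential} compactness; the paper's subsequent generalization will relax these by carrying out the same scheme with subnets in place of subsequences, using Theorem~\ref{compact-net} and Theorem~\ref{Banach-Alauglu}.
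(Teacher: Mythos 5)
Your proof is correct and follows essentially the same strategy the paper uses for its generalization (Lemma \ref{generalized-lemma} and Theorem \ref{main-convergence-theorem}): the upper bound from $\|Q_n\|\|P_n\|\le 1$, followed by a contradiction argument that extracts weak$^*$ limits of Auerbach-normalized near-minimizers $F_n$ to produce an operator $F$ of rank less than $k$ with $\|T-F\|<a_k(T)$. The only difference is that you exploit the separability hypotheses to work with sequences via weak$^*$ metrizability of the unit balls, whereas the paper's generalized proof uses subnets; under the stated hypotheses the two arguments coincide.
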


A problem posed in \cite{dmk} was on the necessity of the assumptions made in its Theorem 3.3. In this article, we show that the separability assumptions made in Theorem \ref{dmk-thm} are quite redundant. We achieve this result by using certain basic topological concepts. It is worth noting that the main ingredient used in the proof of Theorem 3.3 of  \cite{dmk} was its Lemma 3.1. To generalize the result mentioned in Theorem \ref{dmk-thm}, we first establish a generalized version of Lemma 3.1 of \cite{dmk}.
\begin{lemma}\label{generalized-lemma} Let $k\in \mathbb{N}$ be fixed. Suppose $(T_n)$ is a uniformly bounded sequence in $\mathcal{F}_{k+1}(X,Y)$, where $Y=Z^{\prime}$ for some normed linear space $Z$. Then there  exists a $T\in \mathcal{F}_{k+1}(X,Y)$ and a subnet $(T_{h(\alpha)})_{\alpha \in \mathcal{D}}$ (where $\mathcal{D}$ is some directed set) of $(T_n)$ such that $(T_{h(\alpha)})$ converges to $T$ in the weak* topology on $B(X,Y)$. That is, for every $\epsilon>0$, there exists a $\gamma \in \mathcal{D}$ satisfying 
\begin{eqnarray*}
|(T_{h(\alpha)}x)(z)-(Tx)(z)|<\epsilon \quad \forall\, \alpha \in \mathcal{D} \,\, \mbox{ with } \gamma \preceq \alpha.
\end{eqnarray*}

\end{lemma}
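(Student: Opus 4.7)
The plan is to decompose each $T_n$ into a normalized rank-$\leq k$ sum with uniformly norm-bounded factors, extract weak$^*$ convergent subnets for each factor, and assemble the limits into the desired $T$.

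First, I would apply Auerbach's lemma to the range $Y_n := T_n(X) \subseteq Y$, which has dimension $k_n \leq k$. This yields normalized biorthogonal vectors $e_1^n, \ldots, e_{k_n}^n \in Y_n$ and functionals $\psi_1^n, \ldots, \psi_{k_n}^n$ on $Y_n$, each of norm one. Extending every $\psi_i^n$ to an element of $Y^\prime$ by Hahn--Banach with preserved norm and setting $g_i^n := \psi_i^n \circ T_n \in X^\prime$, one obtains a representation
$$T_n x \;=\; \sum_{i=1}^{k_n} g_i^n(x)\, e_i^n, \qquad x \in X,$$
with $\|g_i^n\| \leq M := \sup_n \|T_n\|$ and $\|e_i^n\| \leq 1$. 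Padding with zero functionals and zero vectors when $k_n < k$, each $T_n$ is written as a sum of exactly $k$ rank-one pieces whose factors live in a common pair of weak$^*$ compact balls.

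Next, I would consider the product space $K := (M\,U_{X^\prime})^{k} \times (U_Y)^{k}$ equipped with the product of the weak$^*$ topologies. By Theorem \ref{Banach-Alauglu} and Tychonoff's theorem, $K$ is compact. The assignment $n \mapsto (g_1^n, \ldots, g_k^n, e_1^n, \ldots, e_k^n)$ defines a net in $K$, so by Theorem \ref{compact-net} it has a subnet indexed by some directed set $\mathcal{D}$ converging to some tuple $(g_1, \ldots, g_k, e_1, \ldots, e_k) \in K$. Setting $Tx := \sum_{i=1}^{k} g_i(x) e_i$ then produces an element of $\mathcal{F}_{k+1}(X,Y)$. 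For any fixed $x \in X$ and $z \in Z$, coordinatewise weak$^*$ convergence gives $g_i^{h(\alpha)}(x) \to g_i(x)$ and $e_i^{h(\alpha)}(z) \to e_i(z)$ in $\mathbb{C}$; since products and finite sums of convergent scalar nets behave as expected, the identity $T_{h(\alpha)} x (z) = \sum_{i=1}^{k} g_i^{h(\alpha)}(x)\, e_i^{h(\alpha)}(z)$ delivers the required weak$^*$ convergence $T_{h(\alpha)} x(z) \to Tx(z)$.

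The main obstacle is engineering the uniform boundedness of the decomposition. A naive expansion $T_n = \sum_i f_i^n \otimes y_i^n$ can have factors of arbitrarily large norm even when $\|T_n\|$ stays bounded, which would place the scalar pieces outside any single weak$^*$ compact ball and collapse the compactness step. Auerbach's lemma is the precise tool that controls the two factors simultaneously, and it is the hinge of the argument; the remaining steps are a careful bookkeeping of the subnet indexed over the product of weak$^*$ topologies.
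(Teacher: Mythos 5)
Your proposal is correct and follows essentially the same route as the paper: an Auerbach-controlled rank-$\leq k$ decomposition with uniformly bounded factors, weak$^*$ compactness of the relevant balls via Banach--Alaoglu, subnet extraction, and assembly of the limit operator. The only cosmetic difference is that you extract one subnet from the full product $(M\,U_{X^{\prime}})^{k}\times(U_Y)^{k}$ via Tychonoff, whereas the paper extracts two successive subnets (first for the vectors, then for the functionals); both yield the same conclusion.
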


\begin{proof} 
Since for each $n\in \mathbb{N}$, rank$(T_n):=k_n \leq k$, we can express 
\begin{eqnarray*}
T_{n}x=\displaystyle\sum_{i=1}^{k_n}\psi_{i}^{n}(x)w_{i}^{n} \quad \mbox{ for } x\in X, \,{\psi_i}^{n}\in X^{\prime},\, {w_i}^{n} \in Y.
\end{eqnarray*}

In the expression of $T_n$, without loss of generality, one may assume that the summation runs from $1$ to $k$. Due to the Auerbach's lemma (B.4.8, \cite{pie}, Proposition 2.3, \cite{dmk}), one can also assume that for each $n\in \mathbb{N}$, ${w_i}^{n}$ are in the unit ball $U_{Y}$ and ${\psi_i}^{n}$ are in $W_{X^{\prime}}$, a closed bounded ball of radius, say $P$, in $X^{\prime}$. Since $X^{\prime}$ and $Y$ are dual spaces of normed linear spaces, due to Banach-Alaoglu theorem, both these sets are weak* compact and hence is their finite product. So the net $f:\mathbb{N}\to \displaystyle\prod_{i=1}^{k} U_{Z^{\prime}}$ defined by $f(n)=({w_1}^{n}, {w_2}^{n}, \ldots, {w_k}^{n})$ has a subnet, say $f\circ \phi: \mathcal{D}_1 \to \displaystyle\prod_{i=1}^{k} U_{Z^{\prime}}$ denoted by
\begin{eqnarray*}
(w_1^{\phi(\alpha)},w_2^{\phi(\alpha)},\ldots,w_k^{\phi(\alpha)}),
\end{eqnarray*} 
which converges to some element $(w_1,w_2,\ldots,w_k)$ in $\displaystyle\prod_{i=1}^{k} U_{Z^{\prime}}$ in the weak* sense of convergence. Here $\mathcal{D}_1$ represents a directed set and $\phi$ is a function from $\mathcal{D}_1$ to $\mathbb{N}$ such that the conditions of a subnet (Definition \ref{subnet}) are satisfied. This convergence means that for any $\epsilon >0$, there exists some $\gamma_1\in \mathcal{D}_1$ such that for each $z\in Z$, $1\leq i \leq k$ and for all $\alpha \in \mathcal{D}_1$ with $\gamma_1 \preceq \alpha$,
\begin{eqnarray*}
|{w_i}^{\phi(\alpha)}(z)-{w_i}(z)|< \frac{\epsilon}{2kP}.
\end{eqnarray*}

Now consider the subnet $g\circ \phi$ of the sequence $g(n)=(\psi_1^{n},\psi_2^{n},\ldots,\psi_k^{n})$ given by $g\circ \phi(\alpha)=( {\psi_1}^{\phi(\alpha)}, {\psi_2}^{\phi(\alpha)},\ldots,  {\psi_k}^{\phi(\alpha)})$. Due to the weak* compactness of $\displaystyle\prod_{i=1}^{k} W_{X^{\prime}}$, this net also has a subnet $g\circ \phi\circ\eta: \mathcal{D}_2\to \displaystyle\prod_{i=1}^{k} W_{X^{\prime}}$ which converges in the weak* sense to some element, say, $(\psi_1,\psi_2,\ldots,\psi_k)$ in $\displaystyle\prod_{i=1}^{k} W_{X^{\prime}}$. Here $\mathcal{D}_2$ represents a directed set and the convergence means that there exists a $\gamma_2$ in $\mathcal{D}_2$  such that for each $x\in X, 1\leq i\leq k$ and for all $\alpha \in \mathcal{D}_2$ with $\gamma_2\preceq \alpha$,

\begin{eqnarray*}
|{\psi_i}^{\phi\circ \eta(\alpha)}(x)-\psi_i(x)|<\frac{\epsilon}{2k}.
\end{eqnarray*} 
Being a subnet of $f\circ \phi$, the net $f\circ\phi\circ \eta$ also converges to $(w_1,w_2,\ldots,w_k)$ in the weak$^*$ sense. That is, there exists some $\gamma \in \mathcal{D}_2$ such that $\gamma_2 \preceq \gamma, \,\gamma_1 \preceq \eta(\gamma) $ and for each $z\in Z$, $1\leq i \leq k$, and for all $\alpha \in \mathcal{D}_2$ with $\gamma \preceq \alpha$,
\begin{eqnarray*}
|{w_i}^{\phi\circ\eta (\alpha)}(z)-{w_i}(z)|< \frac{\epsilon}{2kP}.
\end{eqnarray*}

Using these $w_i, \, \psi_i,\,\, 1\leq i\leq k$, we define an operator $T$ as

\begin{equation*}
Tx=\displaystyle\sum_{i=1}^k \psi_i(x)w_i,\,\, x\in X.
\end{equation*} 
Clearly, $T: X\to Y$ is a bounded finite rank operator of rank at most $k$. We claim that there is a subnet of $(T_n)$, which converges to $T$ in the weak* operator topology on $B(X, Y)$. To see this, letting $h=\phi\circ\eta$ and $\mathcal{D}=\mathcal{D}_2$, for each $x\in X$ and $z\in Z$, we have
\begin{eqnarray*} |(T_{h(\alpha)}x)(z)-(Tx)(z)|\leq
\displaystyle\sum_{i=1}^{k}|\psi_i^{h(\alpha)}(x)w_i^{h(\alpha)}(z)-\psi_{i}(x)w_i(z)|.\end{eqnarray*}
Now for each $i\in \{1,2,\ldots,k\}$, $\|x\|\leq 1,\, \|z\|\leq 1$, we get

\begin{eqnarray*}
|\psi_i^{h(\alpha)}(x)w_i^{h(\alpha)}(z)-\psi_{i}(x)w_i(z)|&\leq&|\psi_i^{h(\alpha)}(x)w_i^{h(\alpha)}(z)-\psi_{i}^{h(\alpha)}(x)w_i(z)|\\
&&+|\psi_i^{h(\alpha)}(x)w_i(z)-\psi_{i}(x)w_i(z)|\\
&\leq&|\psi_i^{h(\alpha)}(x)||w_i^{h(\alpha)}(z)-w_i(z)|\\
&&+|w_i(z)||\psi_i^{h(\alpha)}(x)-\psi_i(x)|\\
&\leq&P|w_i^{h(\alpha)}(z)-w_i(z)|+|\psi_i^{h(\alpha)}(x)-\psi_i(x)|,
\end{eqnarray*}
which can be made less than $\displaystyle\frac{\epsilon}{k}$ for each $x\in U_X,
\,z\in U_Z$, whenever $\alpha \in \mathcal{D}$ with $\gamma \preceq \alpha$. This implies that for all such $\alpha\in \mathcal{D}$ and for each $x\in X, \, z\in Z$,

\begin{eqnarray*} |(T_{h(\alpha)}x)(z)-(Tx)(z)|<\epsilon,
\end{eqnarray*}
which proves our claim.
\end{proof}
Since this lemma generalizes the result in Lemma 3.1 of \cite{dmk} under relaxed assumptions, the corollaries of Lemma 3.1 given in \cite{dmk} also follow without much alterations. Here, we state a straightforward consequence of Lemma \ref{generalized-lemma}, whose proof is not provided here, as it closely resembles the proof of Corollary 3.2 in \cite{dmk}.

\begin{corollary}\label{attaining}
Let $Y$ be a dual space and $T\in B(X, Y)$. Then for each $k\in \mathbb{N}$, there exists $F\in \mathcal{F}_k(X,Y)$ such that $a_k(T)=\|T-F\|$.
\end{corollary}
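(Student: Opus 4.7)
The strategy is a standard weak*-compactness argument: extract a subnet from a minimizing sequence of approximants and conclude via lower semicontinuity of the operator norm. The non-trivial piece, weak*-compactness of a bounded set of finite-rank operators whose ranks are bounded, is exactly what Lemma \ref{generalized-lemma} provides; the rest of the proof essentially mirrors the argument of Corollary 3.2 in \cite{dmk}, but carried out with nets in place of sequences.

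Concretely, I would begin by invoking the definition of $a_k(T)$ as an infimum to select a sequence $(F_n) \subset \mathcal{F}_k(X,Y)$ with $\|T - F_n\| \to a_k(T)$. The estimate $\|F_n\| \leq \|T\| + \|T - F_n\|$ shows that $(F_n)$ is uniformly bounded, and since its elements have rank strictly less than $k$, one may apply Lemma \ref{generalized-lemma} with the parameter $k-1$ (so that $\mathcal{F}_{(k-1)+1} = \mathcal{F}_k$). This produces a directed set $\mathcal{D}$, a subnet $(F_{h(\alpha)})_{\alpha \in \mathcal{D}}$ of $(F_n)$, and an operator $F \in \mathcal{F}_k(X,Y)$ such that $(F_{h(\alpha)} x)(z) \to (Fx)(z)$ for every $x \in X$ and $z \in Z$. (The edge case $k=1$ is immediate, since $\mathcal{F}_1 = \{0\}$ and $a_1(T) = \|T\| = \|T - 0\|$.)

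It remains to verify that $\|T - F\| \leq a_k(T)$; the reverse inequality is automatic from the definition. Since $(\|T - F_{h(\alpha)}\|)$ is a subnet of the convergent real sequence $(\|T - F_n\|)$, it converges to the same limit $a_k(T)$. For any $x \in U_X$ and $z \in U_Z$,
\begin{align*}
|((T-F)x)(z)| = \lim_{\alpha} |((T - F_{h(\alpha)})x)(z)| \leq \lim_{\alpha} \|T - F_{h(\alpha)}\| = a_k(T).
\end{align*}
Exploiting the identification $Y = Z^\prime$ to write $\|S\| = \sup\{|(Sx)(z)| : x \in U_X, z \in U_Z\}$ for any $S \in B(X,Y)$, and taking the supremum on the left, yields $\|T - F\| \leq a_k(T)$.

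The only real obstacle is the weak*-compactness step, because the weak* topology on $Y = Z^\prime$ is in general non-metrizable, so one cannot blindly extract a weak*-convergent \emph{subsequence} from $(F_n)$. This is precisely the obstruction that Lemma \ref{generalized-lemma} overcomes by passing to subnets, which is why the separability hypotheses of Theorem \ref{dmk-thm} can be dropped here; once that lemma is available, the argument above is essentially a routine lower-semicontinuity calculation.
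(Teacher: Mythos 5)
Your proposal is correct and follows essentially the same route the paper intends: the paper omits the proof precisely because it is the argument of Corollary 3.2 of \cite{dmk} carried out with Lemma \ref{generalized-lemma} (nets in place of sequences), which is exactly what you do — minimizing sequence, uniform boundedness, weak* convergent subnet with limit in $\mathcal{F}_k(X,Y)$, and weak* lower semicontinuity of the norm via $\|S\|=\sup\{|(Sx)(z)|: x\in U_X,\, z\in U_Z\}$. Your handling of the rank bookkeeping (applying the lemma with parameter $k-1$ and treating $k=1$ separately) is also the right way to match the lemma's indexing.
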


Now, with the help of Lemma \ref{generalized-lemma}, we derive a generalized version of Theorem \ref{dmk-thm} below. The proof of Theorem \ref{main-convergence-theorem} blends ideas from \cite{deep} and \cite{dmk}. We include a proof of this crucial result here, for the sake of completeness. 

\begin{theorem}\label{main-convergence-theorem} Let $Y$ be a dual space of $Z$ and $T\in B(X, Y)$. Let $(P_n)$ and $(Q_n)$ be sequences of operators in $B(X)$ and $B(Y)$ respectively such that $\|P_n\|\|Q_n\|\leq 1$ for all $n\in \mathbb{N}$. If
$T_{n}:=Q_nTP_n$ converges to $T$ in the weak$^*$ topology on $B(X,Y)$, as $n\to \infty$, then for each $k \in \mathbb{N}$,
\begin{eqnarray*} 
\displaystyle\lim_{n\rightarrow \infty}
a_{k}(T_{n})=a_{k}(T).
\end{eqnarray*}

\end{theorem}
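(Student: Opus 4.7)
The plan is to sandwich $\lim_n a_k(T_n)$ between $a_k(T)$ from both sides. For the upper bound, I would invoke the multiplicativity property of $s$-numbers (property 11.1.1 of \cite{pie}) together with the hypothesis $\|Q_n\|\|P_n\|\le 1$ to obtain, for every $n$,
\begin{equation*}
a_k(T_n)=a_k(Q_nTP_n)\le \|Q_n\|\,a_k(T)\,\|P_n\|\le a_k(T),
\end{equation*}
which immediately yields $\limsup_n a_k(T_n)\le a_k(T)$.

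For the reverse inequality $\liminf_n a_k(T_n)\ge a_k(T)$, I would argue by contradiction. Suppose a subsequence $(T_{n_j})$ satisfies $a_k(T_{n_j})\to L$ for some $L<a_k(T)$. Since $Y$ is a dual space, Corollary \ref{attaining} furnishes minimizers $F_{n_j}\in \mathcal{F}_k(X,Y)$ with $\|T_{n_j}-F_{n_j}\|=a_k(T_{n_j})$. From $\|T_{n_j}\|\le \|Q_{n_j}\|\|T\|\|P_{n_j}\|\le \|T\|$ and the triangle inequality, $(F_{n_j})$ is uniformly bounded in $\mathcal{F}_k(X,Y)$. I would then apply Lemma \ref{generalized-lemma} (with its index shifted down by one so that a sequence in $\mathcal{F}_k$ yields a weak$^*$ limit in $\mathcal{F}_k$; the boundary case $k=1$ reduces to $a_1(\cdot)=\|\cdot\|$ and follows directly from the weak$^*$ lower semicontinuity of the norm together with the easy upper bound) to extract a subnet $(F_{n_{h(\alpha)}})_{\alpha\in\mathcal{D}}$ that converges in the weak$^*$ operator topology to some $F\in \mathcal{F}_k(X,Y)$. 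Because $(T_n)$ converges to $T$ in the weak$^*$ operator topology by hypothesis, the corresponding subnet $(T_{n_{h(\alpha)}})$ converges to $T$ as well.

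To close the contradiction, for each $x\in U_X$ and $z\in U_Z$ I would estimate
\begin{align*}
|(Tx-Fx)(z)|
&=\lim_{\alpha}|(T_{n_{h(\alpha)}}x-F_{n_{h(\alpha)}}x)(z)|\\
&\le \liminf_{\alpha}\|T_{n_{h(\alpha)}}-F_{n_{h(\alpha)}}\|
= \liminf_{\alpha}a_k(T_{n_{h(\alpha)}})=L,
\end{align*}
and then take the supremum over $U_X\times U_Z$, using that $\|T-F\|=\sup\{|(Tx-Fx)(z)|:x\in U_X,\,z\in U_Z\}$ because $Y=Z^{\prime}$. This produces $\|T-F\|\le L<a_k(T)$, which contradicts the infimum defining $a_k(T)$ since $F\in \mathcal{F}_k(X,Y)$.

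The hard part will be the subnet bookkeeping: one must pass simultaneously to subnets of $(F_{n_j})$ and $(T_{n_j})$ along the same directed set, and the shifted-index invocation of Lemma \ref{generalized-lemma} must be managed so that the resulting weak$^*$ limit $F$ genuinely lies in $\mathcal{F}_k$ (rank strictly less than $k$) rather than merely in $\mathcal{F}_{k+1}$. Once this indexing is handled, the weak$^*$ limit computation and the contradiction are essentially forced by the definitions and by the fact that Lemma \ref{generalized-lemma} and Corollary \ref{attaining} do the real work.
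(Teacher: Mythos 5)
Your proposal is correct and follows essentially the same route as the paper's proof: the easy upper bound $a_k(T_n)\le a_k(T)$ from $\|Q_n\|\|P_n\|\le 1$, then a contradiction argument that extracts, via Lemma \ref{generalized-lemma}, a weak$^*$-convergent subnet of a uniformly bounded family of finite-rank (near-)minimizers and uses weak$^*$ lower semicontinuity of the norm on $Y=Z^{\prime}$ to force $\|T-F\|<a_k(T)$. The only cosmetic differences are your use of exact minimizers from Corollary \ref{attaining} where the paper uses $\epsilon$-near minimizers, and your explicit handling of the index shift in the lemma and the $k=1$ case, which the paper glosses over.
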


\begin{proof}

Fix $k\in \mathbb{N}$ and let $d:= a_{k}(T)$. For $n\in \mathbb{N}$, let $d_{n}:= a_{k}(T_{n})$ and consider $F \in \mathcal{F}_k(X,Y)$. Then
$$a_k(T_n)\leq \|T_n - Q_nFP_n\|\leq \|T-F\|.$$ Then we get $a_k(T_n)\leq a_k(T)$ for all $n\in\mathbb{N}$. Hence
\begin{eqnarray*}
\sup_{n}d_n=\displaystyle\sup_{n}a_k(T_n) \leq a_k(T)=d.
\end{eqnarray*} 
Since for $d=0$, the conclusion is trivial, assume that $d_n
\not\to d$ with $d>0$. Then there exists an $\epsilon
>0$ and infinitely many $n$ such that $d_n<d-\epsilon$. Hence there
exist operators $F_{n_j}\in \mathcal{F}_k(X,Y)$ such that for all $j\in \mathbb{N}$,
\begin{eqnarray*} \|Q_{n_j}TP_{n_j}-F_{n_j}\|<d-\epsilon.\end{eqnarray*}
Thus for all $j\in \mathbb{N}$,
\begin{eqnarray*} \|F_{n_j}\|\leq
\|F_{n_j}-Q_{n_j}TP_{n_j}\|+\|Q_{n_j}TP_{n_j}\|<d+\|T\|.\end{eqnarray*}
Hence by Lemma \ref{generalized-lemma}, there exist an $F\in
\mathcal{F}_k(X,Y)$ and a subnet $(F_{h(\alpha)})$ of $(F_{n_{j}})$, where $\alpha$ is from a directed set $\mathcal{D}$,
such that $F_{h(\alpha)}$ converges to $F$ in the weak* sense of convergence. Now, let $x\in X,\, z\in Z$ be such that $\|x\|\leq 1,~\|z\|\leq 1$. Then for each $\alpha \in \mathcal{D}$, we have
\begin{eqnarray*}
|(Tx)(z)-(Fx)(z)|&\leq &|
(Tx)(z)-(Q_{h(\alpha)}TP_{h(\alpha)}x)(z)|\\
&+&|(Q_{h(\alpha)} TP_{h(\alpha)}x)(z)-(F_{h(\alpha)}x)(z)|\\
&+&|(F_{h(\alpha)}x)(z)-(Fx)(z)|.
\end{eqnarray*}
Note that for each $\alpha\in D$,
\begin{eqnarray*} 
|(Q_{h(a)}TP_{h(\alpha)}x)(z)-(F_{h(\alpha)}x)(z)| \leq
\|Q_{h(\alpha)}TP_{h(\alpha)}-F_{h(\alpha)}\|<d-\epsilon,
\end{eqnarray*}
whereas both the terms $|(Tx)(z)-(Q_{h(\alpha)}TP_{h(\alpha)}x)(z)|$ and
$|(F_{h(\alpha)}x)(z)-(Fx)(z)|$ can be made less than $\displaystyle\frac{\epsilon}{3}$ by
choosing suitable $\alpha$ from $D$. Hence
\begin{eqnarray*} 
|(Tx)(z)-(Fx)(z)| < d- \frac{\epsilon}{ 3}.
\end{eqnarray*}
Since this holds for each $x\in X$ and $z\in Z$ with
$\|x\|\leq 1,\, \|z\|\leq 1$, we have
\begin{eqnarray*} \|T-F\|\leq d- \displaystyle\frac{\epsilon}{3}.\end{eqnarray*}
Thus $d \leq d- \displaystyle\frac{\epsilon}{3}$, which is a contradiction. Hence $d_n \rightarrow d$ as $n\rightarrow \infty$.
\end{proof}
It can be easily observed that the result in Theorem \ref{main-convergence-theorem} holds if $Y$ is isometrically isomorphic to a dual space $Z^{\prime}$.
Theorem \ref{main-convergence-theorem} allows us to establish the convergence of approximation numbers for truncations of the adjoint of an operator without requiring any additional assumptions on the spaces involved.

\begin{corollary}\label{coro-convergence-transpose} 
Let $X, Y$ be any normed linear spaces, $T\in B(X,Y)$ and for each $n\in \mathbb{N}$, let $P_n,\, Q_n$ be as in Theorem \ref{main-convergence-theorem}. Suppose that $T_n:=Q_nTP_n \to T$ as $n\to \infty$ in the weak operator topology on $B(X,Y)$. Then
\begin{eqnarray*} 
\displaystyle \lim_{n\rightarrow
\infty}a_k(T_n^{\prime})=a_k(T^{\prime}).\end{eqnarray*}

\end{corollary}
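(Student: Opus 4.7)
The plan is to apply Theorem \ref{main-convergence-theorem} directly to the adjoint operator $T' \in B(Y', X')$, with the truncations $T_n'$ playing the role of the approximating sequence. The key observation that makes this work without any extra hypotheses on $X$ or $Y$ is that the codomain $X'$ of $T'$ is automatically a dual space (namely, of $X$), so Theorem \ref{main-convergence-theorem} applies to $T'$ as soon as we can supply the right truncating factors and verify the correct mode of operator convergence.

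First I would compute $T_n' = (Q_n T P_n)' = P_n' T' Q_n'$. In the language of Theorem \ref{main-convergence-theorem} applied to $T' \in B(Y', X')$ (so that the role of the domain "$X$" is played by $Y'$, and the role of the codomain "$Y = Z'$" is played by $X'$ with $Z = X$), the operator $Q_n' \in B(Y')$ takes the role of the right factor and $P_n' \in B(X')$ takes the role of the left factor. Since adjoints preserve norms, $\|Q_n'\|\,\|P_n'\| = \|Q_n\|\,\|P_n\| \leq 1$, so the norm condition of the theorem is satisfied.

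The heart of the argument is the translation of the hypothesis $T_n \to T$ in the weak operator topology on $B(X,Y)$ into weak* operator convergence $T_n' \to T'$ on $B(Y', X')$. Unpacking the definition of the weak* operator topology on $B(Y', X')$ (regarding $X'$ as the dual of $X$), one needs to show that for each $f \in Y'$ and each $x \in X$,
\begin{eqnarray*}
(T_n' f)(x) \longrightarrow (T' f)(x).
\end{eqnarray*}
But $(T_n' f)(x) = f(T_n x)$ and $(T' f)(x) = f(Tx)$, so this convergence is exactly the statement that $f(T_n x) \to f(Tx)$ for every $f \in Y'$ and $x \in X$, which is precisely the weak operator convergence hypothesized for $T_n \to T$. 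Hence the weak* operator convergence of $T_n'$ to $T'$ is automatic.

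With every hypothesis of Theorem \ref{main-convergence-theorem} verified for $T'$ and its truncations $P_n' T' Q_n'$, the theorem yields $\lim_{n\to\infty} a_k(T_n') = a_k(T')$ for each $k \in \mathbb{N}$. The only conceptual obstacle here is recognising that lifting the problem to adjoints automatically secures the dual-space assumption on the codomain, which is precisely what makes the generalized version of Theorem \ref{main-convergence-theorem} (without separability) essential: the earlier Theorem \ref{dmk-thm} would have required $X'$ to be the dual of a separable space and $Y'$ itself to be separable, conditions that fail for most $X$ and $Y$.
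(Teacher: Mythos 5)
Your proposal is correct and follows essentially the same route as the paper: pass to the adjoint $T_n' = P_n' T' Q_n'$, note that $X'$ is automatically a dual space, check $\|P_n'\|\|Q_n'\| = \|P_n\|\|Q_n\| \le 1$, and observe that weak operator convergence of $T_n \to T$ is precisely weak* operator convergence of $T_n' \to T'$, so Theorem \ref{main-convergence-theorem} applies. The paper's proof is just a terser version of the same argument.
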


\begin{proof} 
Here $X^{\prime}$ is the codomain of the adjoint operator. Since $T_n \to T$ in the weak sense, we have  $T_n^{\prime} \to T^{\prime}$ in the weak$^*$ sense. Further, for each $n\in \mathbb{N}$, $\|P_n^{\prime}\|\,\|Q_n^{\prime}\|= \|P_n\|\,\|Q_n\| \leq 1$. Hence, the result follows from Theorem \ref{main-convergence-theorem}.
\end{proof}
The concepts of symmetry and complete symmetry of $s$-numbers are well studied in the literature, and they play vital roles in the classification of operator spaces \cite{hut}.
\begin{definition}(\cite{pie})
An $s$-number function $(s_k(T))$ is said to be 
\begin{itemize}
\item[i.] symmetric if $s_k(T^{\prime})\leq s_k(T)$
\item[ii.] completely symmetric if $s_k(T^{\prime})= s_k(T)$,
\end{itemize}
for all $T\in B(X,Y)$ and for each $k\in\mathbb{N}$. 
\end{definition}
Mathematicians have investigated the occurrence of equality in the definition of complete symmetry for particular classes of operators. Regarding approximation numbers, it was demonstrated in \cite{hut} that for any compact operator $T$, $a_k(T^{\prime}) = a_k(T)$ for all $k \in \mathbb{N}$.

In \cite{dmk}, using Theorem \ref{dmk-thm}, the authors demonstrated a close relationship between the convergence problem and the complete symmetry problem (for approximation numbers) under certain assumptions. Our main result in Theorem \ref{main-convergence-theorem} allows us to establish a strong connection between these two problems without imposing any separability assumptions on the spaces.

\begin{theorem}\label{equality-convergence-thm} 
Suppose that there exist operators $P_n$ and $Q_n$ in $B(X)$ and $B(Y)$ respectively such that for all $n\in \mathbb{N}$, $T_n:=Q_nTP_n$ are compact operators in $B(X,Y)$, $\|P_n\|\|Q_n\|\leq 1$, and $(T_n)$ converge to $T$ in the weak sense of operator convergence. Then for each $k\in \mathbb{N}$,
\begin{eqnarray*} 
 \lim_{n\rightarrow
\infty}a_k(T_n)=a_k(T) \iff a_k(T)=a_k(T^{\prime}).
\end{eqnarray*}
\end{theorem}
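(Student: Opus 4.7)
The plan is to combine two ingredients already established in the paper: Corollary \ref{coro-convergence-transpose}, which guarantees that the approximation numbers of the adjoint truncations converge to those of $T^{\prime}$, and Hutton's theorem \cite{hut} stating that $a_k(S^{\prime}) = a_k(S)$ for every compact operator $S$ and every $k \in \mathbb{N}$.

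First I would apply Corollary \ref{coro-convergence-transpose} directly. Its hypotheses coincide with those of the theorem at hand, namely $\|P_n\|\|Q_n\| \leq 1$ together with $T_n = Q_n T P_n \to T$ in the weak operator topology on $B(X,Y)$, so we immediately obtain $\lim_{n \to \infty} a_k(T_n^{\prime}) = a_k(T^{\prime})$ for every fixed $k \in \mathbb{N}$, with no separability assumption whatsoever on $X$ or on any predual of $Y$.

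Next I would use the compactness hypothesis on each $T_n$: Hutton's theorem yields $a_k(T_n) = a_k(T_n^{\prime})$ for every $k, n \in \mathbb{N}$. Passing to the limit on both sides gives $\lim_{n\to\infty} a_k(T_n) = \lim_{n\to\infty} a_k(T_n^{\prime}) = a_k(T^{\prime})$, from which the equivalence is immediate in both directions: the left-hand side equals $a_k(T)$ precisely when $a_k(T) = a_k(T^{\prime})$.

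The argument presents no genuine technical obstacle — once Corollary \ref{coro-convergence-transpose} is available in the present (separability-free) generality, the theorem reduces to an application of Hutton's identity on each truncation followed by a passage to the limit. The substantive content of the statement is therefore conceptual rather than computational: it is the generalized Theorem \ref{main-convergence-theorem} that allows the convergence problem for compact truncations of $T$ to be identified, on arbitrary Banach spaces $X$ and $Y$, with the complete symmetry problem for the approximation numbers of $T$, thereby extending the corresponding linkage in \cite{dmk} out of the separable setting.
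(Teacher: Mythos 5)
Your proposal is correct and follows essentially the same route as the paper's own proof: invoke Corollary \ref{coro-convergence-transpose} to get $\lim_n a_k(T_n^{\prime})=a_k(T^{\prime})$, apply Hutton's identity $a_k(T_n)=a_k(T_n^{\prime})$ for the compact truncations, and conclude the equivalence by uniqueness of limits. No substantive difference from the published argument.
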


\begin{proof}
By Corollary \ref{coro-convergence-transpose}, $\displaystyle \lim_{n\rightarrow \infty}a_k(T_n^{\prime})=a_k(T^{\prime})$. If  $T_n$ is compact for each $n\in\mathbb{N}$, we have $a_k({T_n}^{\prime}) = a_k({T_n})$, by \cite{hut}. The result follows from the uniqueness of limits.
\end{proof}

\begin{remark}{\rm
The example given in Proposition 4.4 of \cite{dmk} shows the necessity of the assumptions made on the codomain space in Theorem \ref{main-convergence-theorem} of this article.}
\end{remark}

The convergence problem discussed for approximation numbers is also valid for other $s$-numbers. As mentioned at the begining of this section, due to the inequality
$|s_k(T_n)-s_k(T)|\leq \|T_n-T\|$, the convergence problem has a positive answer for all $s$-numbers, provided $(T_n)$ converges to $T$ in the norm sense. As another direct application of Theorem \ref{main-convergence-theorem} (Theorem \ref{main-theorem-6} also suffices), we observe a convergence result for $s$-number of operators between Hilbert spaces.
\begin{theorem}\label{hilbertspace-equal}
Let $H_1,H_2$ be Hilbert spaces and $T\in B(H_1,H_2)$. Suppose $(P_n)$ and $(Q_n)$ are sequences of operators in $B(H_1)$ and $B(H_2)$ respectively such that for all $n\in \mathbb{N}$, $\|P_n\|\|Q_n\|\leq 1$ and $T_n:=Q_nTP_n$ converges to $T$ in the weak* sense of operator convergence. Then for each $k\in \mathbb{N}$, $ \lim_{n\rightarrow
\infty}s_k(T_n)=s_k(T)$.
\end{theorem}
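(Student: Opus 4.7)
The plan is to reduce the statement for a general $s$-number function to the corresponding statement for approximation numbers, which is already handled by Theorem \ref{main-convergence-theorem}. The key input is the classical fact from the axiomatic theory of $s$-numbers: on the class of operators between Hilbert spaces, every $s$-number function $s_k$ coincides with the approximation number $a_k$ (the $k$-th singular value). This uniqueness result is due to Pietsch (see \cite{pie, pie-2}).

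First, I would observe that $H_2$, being a Hilbert space, is reflexive and hence isometrically isomorphic to its own dual via the Riesz representation theorem. In particular, $H_2$ is (isometrically isomorphic to) a dual space, so the hypotheses of Theorem \ref{main-convergence-theorem}, together with the remark following that theorem concerning isometric isomorphism with a dual space, apply to the present setup.

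Next, applying Theorem \ref{main-convergence-theorem} with $X = H_1$ and $Y = H_2$, together with the given sequences $(P_n),(Q_n)$ and $T_n = Q_n T P_n \to T$ in the weak$^*$ operator topology, I would conclude that for every $k \in \mathbb{N}$,
\begin{eqnarray*}
\lim_{n\to\infty} a_k(T_n) = a_k(T).
\end{eqnarray*}
Invoking the uniqueness of $s$-numbers on Hilbert space operators then gives $s_k(T_n) = a_k(T_n)$ and $s_k(T) = a_k(T)$, whence $s_k(T_n) \to s_k(T)$, as required.

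The only mild subtlety, and therefore the step requiring the most care, is verifying that the weak$^*$ topology on $B(H_1, H_2)$ used in the hypothesis matches the weak$^*$ topology prescribed in Theorem \ref{main-convergence-theorem} under the Riesz identification $H_2 \cong H_2'$. Since this identification is given by the inner product, the weak$^*$ convergence of $T_n$ in the sense of Remark \ref{wot} translates directly to the stated hypothesis, so no additional work is needed beyond citing the two ingredients.
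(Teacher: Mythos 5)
Your proposal is correct and follows exactly the paper's own argument: identify $H_2$ with a dual space via Riesz representation, apply Theorem \ref{main-convergence-theorem} to get $a_k(T_n)\to a_k(T)$, and invoke the uniqueness of $s$-numbers on Hilbert space operators (Theorem 11.3.4 of \cite{pie}) to transfer the conclusion to $s_k$. The additional care you take in checking that the weak$^*$ topology matches under the Riesz identification is a reasonable refinement but does not change the argument.
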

\begin{proof}
From Theorem 11.3.4 of \cite{pie}, all $s$-numbers coincide with the approximation numbers for operators between Hilbert spaces. Hence under the given assumptions, Theorem \ref{main-convergence-theorem} can be used to conclude the result.
\end{proof}

The inequality mentioned above and Theorem \ref{hilbertspace-equal} strongly suggest that the convergence problem may have a positive answer when considered for other $s$-numbers of operators between Banach spaces as well.
In the remaining part of this article, we assume that $X$ and $Y$ are Banach spaces, and that $P_n$ and $Q_n$ are operators in $B(X)$ and $B(Y)$ respectively, satisfying $\|Q_n\|\leq 1$ and $\|P_n\|\leq 1$ for each $n\in \mathbb{N}$. Here, we consider only one sided truncations of an operator $T\in B(X,Y)$ (that is, either $T_n:=Q_nT$ or $T_n:=TP_n$) that converge to $T$ in the weak* sense of operator convergence, in order to obtain convergence results for other $s$-numbers as applications of Theorem \ref{main-convergence-theorem}. 
We start with a convergence result for the Chang numbers.

\begin{theorem}\label{chang-numbers}
Let $T\in B(X,Y)$ and $T_n=TP_n$ converges to $T$ in the weak* sense as $n\to \infty$. Then for each $k\in \mathbb{N}$,
\begin{eqnarray*}
\displaystyle\lim_{n\to \infty} y_k(T_n)=y_k(T).
\end{eqnarray*}
\end{theorem}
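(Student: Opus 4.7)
The strategy is to sandwich $y_k(T_n)$ between two quantities both converging to $y_k(T)$. For the easy upper bound, I would fix an arbitrary $S\in B(Y,\ell_2)$ with $\|S\|\leq 1$ and exploit the multiplicativity of the approximation numbers: since $ST_n=(ST)P_n$, and $a_k$ is an $s$-number with $\|P_n\|\leq 1$, we have
\[
a_k(ST_n)\leq a_k(ST)\,\|P_n\|\leq a_k(ST)\leq y_k(T).
\]
Taking the supremum over $S$ then gives $y_k(T_n)\leq y_k(T)$ for every $n$, so $\limsup_n y_k(T_n)\leq y_k(T)$.

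For the matching lower bound, the plan is to reduce to Theorem \ref{main-convergence-theorem} via a near-optimal choice of $S$. Given $\varepsilon>0$, I would choose $S_0\in B(Y,\ell_2)$ with $\|S_0\|\leq 1$ and $a_k(S_0T)>y_k(T)-\varepsilon$, and then set $U:=S_0T\in B(X,\ell_2)$, $U_n:=S_0T_n=UP_n$. Since $\ell_2$ is isometrically isomorphic to its own dual and $\|P_n\|\,\|I_{\ell_2}\|\leq 1$, Theorem \ref{main-convergence-theorem} is applicable to $U$ with the truncations $U_n$, provided we verify that $U_n\to U$ in the weak* operator topology on $B(X,\ell_2)$. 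Granting this, Theorem \ref{main-convergence-theorem} yields $a_k(U_n)\to a_k(U)=a_k(S_0T)$, and combining with $a_k(S_0T_n)\leq y_k(T_n)$ gives $\liminf_n y_k(T_n)\geq a_k(S_0T)>y_k(T)-\varepsilon$; sending $\varepsilon\downarrow 0$ closes the inequality.

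The main technical step, and the place where care is needed, is the verification that $U_n=S_0TP_n$ converges to $U=S_0T$ in the weak* topology on $B(X,\ell_2)$. Concretely, this requires showing $\langle S_0T_n x,\xi\rangle_{\ell_2}\to\langle S_0Tx,\xi\rangle_{\ell_2}$ for every $x\in X$ and $\xi\in\ell_2$, which through the adjoint rewrites as $(T_n x-Tx)(S_0^*\xi)\to 0$ with $S_0^*\xi\in Y'$. The assumed weak* convergence $T_n\to T$ delivers convergence of pairings against the canonical image of $Z$ in $Y'$; the plan is to transfer this to convergence against $S_0^*\xi$ via the natural continuity of composition with a bounded operator into a reflexive codomain, thereby satisfying the hypothesis of Theorem \ref{main-convergence-theorem}. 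Once that transfer is in hand, the two one-sided estimates combine to yield $\lim_n y_k(T_n)=y_k(T)$.
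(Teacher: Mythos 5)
Your proof follows the paper's argument essentially step for step: the same monotonicity estimate $a_k(ST_n)=a_k((ST)P_n)\le a_k(ST)$ for the upper bound, and the same reduction of the lower bound to Theorem \ref{main-convergence-theorem} applied to a near-optimal $S_0T$, exploiting that the codomain $\ell_2$ is its own dual so no separability hypothesis is needed. The only place you go beyond the paper is the verification that $S_0TP_n\to S_0T$ in the weak* topology on $B(X,\ell_2)$ -- the paper simply asserts that Theorem \ref{main-convergence-theorem} applies without checking this hypothesis, so flagging it is to your credit. But be careful with the mechanism you propose for the ``transfer'': composition with a bounded operator $S_0:Y\to\ell_2$ is weak-to-weak continuous, not weak*-to-weak continuous in general (that would require $S_0$ to be an adjoint), and $S_0'\xi$ is an arbitrary element of $Y'$ rather than one coming from the predual $Z$. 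Hence $(T_nx-Tx)(S_0'\xi)\to 0$ follows if $T_n\to T$ in the \emph{weak} operator topology, but not from weak* convergence against a proper predual of $Y$. This ambiguity is already present in the theorem's hypothesis as stated (and in the paper's own proof), so your argument is no worse off; just be aware that ``continuity of composition into a reflexive codomain'' does not by itself bridge the gap between weak* and weak convergence on the domain side.
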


\begin{proof}
For each $n\in \mathbb{N}$, since $a_k(STP_n)\leq a_k(ST)$, we have  
\begin{eqnarray*}
y_k(T_n)=\sup\left\{a_k(ST_n): S\in B(Y,\ell_2), \, \|S\|\leq 1\right\}\leq y_k(T).
\end{eqnarray*}
Here the codomain of $ST_n$ is reflexive and $\|P_n\|\leq 1$ for $n\in \mathbb{N}$. Hence for each such $S$,
\begin{eqnarray*}
a_k(ST_n)\to a_k(ST)
\end{eqnarray*}  
as $n\to \infty$, by Theorem \ref{main-convergence-theorem}. Now for every $\epsilon >0$, there exists an $\tilde{S}\in B(Y,\ell^2)$ such that $y_k(T) < a_k(\tilde{S}T)+\displaystyle\frac{\epsilon}{2}$. For this $\tilde{S}$ also, $y_k(T_n) \geq a_k(\tilde{S}T_n)$ for all $n\in \mathbb{N}$. So we have 
\begin{eqnarray*}
|y_k(T)-y_k(T_n)|< a_k(\tilde{S}T)+\frac{\epsilon}{2}-a_k(\tilde{S}T_n),
\end{eqnarray*}
which can be made less than $\epsilon$ for sufficiently large $n$, proving our claim.
\end{proof}

\begin{remark}{\rm
Note that in the above result, the codomain of $ST$ is a reflexive space and so we do not have to impose separability assumptions on the  spaces involved.}
\end{remark}

However, to obtain a similar result for Weyl numbers, we need to assume the separability of the codomain space of the operator, in order to apply Theorem \ref{main-convergence-theorem}. The proof technique is similar to that of the previous theorem. 

\begin{theorem}\label{weyl-numbers}
Let $Y$ be a dual space, $T\in B(X,Y)$ and $T_n:=Q_nT$ converges to $T$ as $n\to \infty$, in the weak* sense of convergence. Then for each $k\in \mathbb{N}$,
\begin{eqnarray*}
\displaystyle\lim_{n\to \infty} x_k(T_n)=x_k(T).
\end{eqnarray*}
\end{theorem}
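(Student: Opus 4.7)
The plan is to mirror the argument used for the Chang numbers in Theorem~\ref{chang-numbers}, simply exchanging the roles of the left and right truncations. Since here the codomain $Y$ travels with $T$ (rather than being replaced by the reflexive space $\ell_2$ as in the Chang case), the hypothesis that $Y$ is a dual space is precisely what is needed to enable the direct application of Theorem~\ref{main-convergence-theorem}.

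First I would establish the easy upper bound. For any admissible $R\in B(\ell_2,X)$ with $\|R\|\leq 1$, the multiplicativity property of approximation numbers together with $\|Q_n\|\leq 1$ gives $a_k(T_n R)=a_k(Q_n T R)\leq \|Q_n\|\,a_k(TR)\leq a_k(TR)$, and taking the supremum over such $R$ yields $x_k(T_n)\leq x_k(T)$ for every $n\in\mathbb{N}$.

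The crux of the argument is to show that, for each fixed $R\in B(\ell_2,X)$ with $\|R\|\leq 1$, $a_k(Q_n T R)\to a_k(TR)$ as $n\to\infty$. To see this, I would apply Theorem~\ref{main-convergence-theorem} to the operator $TR\in B(\ell_2,Y)$, taking the left truncations to be $Q_n$ and the right truncations to be $I_{\ell_2}$. Since $Y=Z^{\prime}$ is a dual space and $\|Q_n\|\cdot\|I_{\ell_2}\|\leq 1$, the only point to verify is that $Q_n T R\to TR$ in the weak$^*$ operator topology on $B(\ell_2,Y)$; but this is immediate from the hypothesis $Q_n T\to T$ in the weak$^*$ operator topology on $B(X,Y)$, since for each $e\in\ell_2$ and $z\in Z$ the vector $Re$ lies in $X$, whence $(Q_n T R e)(z)\to (T R e)(z)$.

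To close the argument, given $\epsilon>0$ I would select $\tilde R\in B(\ell_2,X)$ with $\|\tilde R\|\leq 1$ such that $x_k(T)<a_k(T\tilde R)+\epsilon/2$, and then use the pointwise (in $\tilde R$) convergence $a_k(T_n\tilde R)\to a_k(T\tilde R)$ established in the previous paragraph to get $x_k(T_n)\geq a_k(T_n\tilde R)>x_k(T)-\epsilon$ for all sufficiently large $n$; combined with the upper bound this yields $|x_k(T_n)-x_k(T)|<\epsilon$. Conceptually no step is difficult once Theorem~\ref{main-convergence-theorem} is in hand; the one subtlety worth flagging is the verification that weak$^*$ operator convergence of $Q_n T\to T$ transfers to the composed operators $Q_n T R\to TR$ into the dual space $Y$, which is exactly the reason the dual-space hypothesis on $Y$, rather than any separability assumption, suffices.
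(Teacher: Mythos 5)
Your proposal is correct and follows essentially the same route as the paper: the paper's proof of this theorem simply notes that since $Y$ is a dual space, Theorem~\ref{main-convergence-theorem} applies to $a_k(T_nR)$ for each fixed $R$, and then invokes the same supremum-plus-$\epsilon/2$ argument used for the Chang numbers in Theorem~\ref{chang-numbers}. You have merely written out explicitly the steps the paper leaves to the reader, including the (correct) verification that weak$^*$ convergence of $Q_nT$ transfers to $Q_nTR$.
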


\begin{proof}
For each $n\in \mathbb{N}$, we have 
\begin{eqnarray*}
x_k(T_n)=\sup\left\{a_k(T_n R): R\in B(\ell^2,X), \, \|R\|\leq 1\right\}.
\end{eqnarray*}
Since $Y$ is assumed to be a dual space, we can apply Theorem \ref{main-convergence-theorem} to $a_k(T_n R)$, and arguments similar to the proof of Theorem \ref{chang-numbers} leads to the conclusion.
\end{proof}

To prove similar convergence results for Kolmogorov and Gelfand numbers, we use certain general Banach spaces known in the literature \cite{carl}.

For an arbitrary index set $A$, the space $\ell^1(A)$ denotes the Banach space of all absolutely summable complex number families $(x_{\alpha})_{\alpha \in A}$. That is,
\begin{eqnarray*}
\ell^{1}(A)=\left\{(x_{\alpha}):\,\, \sum_{\alpha \in A}|x_{\alpha}|<\infty \right\} \mbox{ with }\|(x_{\alpha})\|_1=\sum_{\alpha \in A}|x_{\alpha}|,
\end{eqnarray*}
assuming $x_{\alpha}=0$ for all but countably many $\alpha \in A$ in the summation. The Banach space $\ell^{\infty}(A)$ denotes the collection of all bounded complex number families defined over the index set $A$. That is,
\begin{eqnarray*}
\ell^{\infty}(A)=\left\{(x_{\alpha}):\,\, \sup_{\alpha \in A}|x_{\alpha}|<\infty \right\} \mbox{ with }\|(x_{\alpha})\|_{\infty}=\sup_{\alpha \in A}|x_{\alpha}|.
\end{eqnarray*}
The space $c_{00}(A)$ with  $\|\cdot\|_{\infty}$ is defined analogous to the usual sequence space $c_{00}$.
\begin{eqnarray*}
c_{00}(A)=\left\{(x_{\alpha}): x_{\alpha}=0 \,\, \mbox{ for all } \alpha \notin \tilde{A}, \,\mbox{where }\tilde{A} \mbox{ is any finite subset of }A \right\}.
\end{eqnarray*}
Apart from being generalizations of the usual sequence spaces, these spaces play important roles in the geometry of Banach spaces. It is known (\cite{carl}) that every Banach space $X$ is isometrically isomorphic to the quotient space $\ell^1(U_X)/{M}_0$ with the quotient norm, where $M_0$ is the closed subspace of $\ell^1(U_X)$ given by
\begin{eqnarray*}
M_0 :=\left\{(x_{\alpha})_{\alpha \in U_X}:\,\, \sum_{\alpha \in U_X}\alpha x_{\alpha}=0 \right\}\subset \ell^1(U_X).
\end{eqnarray*}
For an operator whose codomain is a Banach space with the \textit{metric lifting property}, the approximation numbers coincide with the corresponding Kolmogorov numbers (see Proposition 2.2.3 in \cite{carl} for details). For operators between arbitrary Banach spaces, the following result establishes the relationship between these two types of numbers. 

\begin{theorem}\label{carl-kolmogorov}(Theorem 2.2.1, \cite{carl})
Let $Q_X$ be the quotient map from $\ell^1(U_X)$ onto $X$ and $T\in B(X,Y)$. Then for each $k\in \mathbb{N}$,
\begin{eqnarray*}
d_k(T)=a_k(TQ_X).
\end{eqnarray*}
\end{theorem}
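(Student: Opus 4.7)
The plan is to prove the identity by establishing the two-step chain
$$a_k(TQ_X) = d_k(TQ_X) = d_k(T),$$
where the second equality uses that $Q_X$ is a metric surjection, and the first equality uses the metric lifting property of $\ell^1(U_X)$.

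For the second equality, I would first observe that for any bounded linear map $S$ out of $X$, one has $\|S Q_X\| = \|S\|$; this is because $Q_X$ maps the open unit ball of $\ell^1(U_X)$ onto the open unit ball of $X$, which is a defining property of the canonical quotient. Applying this with $S = Q_N^Y T$ gives $\|Q_N^Y T Q_X\| = \|Q_N^Y T\|$, and taking the infimum over all closed subspaces $N \subseteq Y$ with $\dim N < k$ yields $d_k(TQ_X) = d_k(T)$.

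For the first equality, I would prove the sharper general fact that $a_k(R) = d_k(R)$ for every $R \in B(\ell^1(U_X), Y)$. The inequality $d_k(R) \leq a_k(R)$ is immediate: given $F \in \mathcal{F}_k(\ell^1(U_X), Y)$, take $N$ to be the range of $F$, which has dimension less than $k$, and observe that $\|Q_N^Y R\| = \|Q_N^Y (R - F)\| \leq \|R - F\|$. For the reverse inequality, fix $\epsilon > 0$ and a closed subspace $N \subseteq Y$ with $\dim N < k$. The metric lifting property of $\ell^1(U_X)$ produces an operator $L \in B(\ell^1(U_X), Y)$ with $Q_N^Y L = Q_N^Y R$ and $\|L\| \leq (1 + \epsilon)\|Q_N^Y R\|$. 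Setting $F := R - L$, one checks $Q_N^Y F = 0$, so the range of $F$ lies in $N$; in particular the rank of $F$ is less than $k$, and $\|R - F\| = \|L\| \leq (1 + \epsilon)\|Q_N^Y R\|$. First letting $\epsilon \to 0$ for each fixed $N$ and then taking the infimum over $N$ yields $a_k(R) \leq d_k(R)$.

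The main (and essentially only) obstacle is supplying the metric lifting property of $\ell^1(U_X)$. This is, however, a routine fact: given an operator $S : \ell^1(U_X) \to Y/N$, one assigns to each standard basis vector $e_\alpha$ a preimage $l_\alpha \in Y$ of $S e_\alpha$ under the quotient map $Q_N^Y$ whose norm is within a factor $1 + \epsilon$ of $\|S e_\alpha\|$, and then extends the assignment $e_\alpha \mapsto l_\alpha$ by linearity and continuity to a bounded operator $L$ on $\ell^1(U_X)$ with $\|L\| \leq (1 + \epsilon)\|S\|$. Combining the two equalities then yields $a_k(TQ_X) = d_k(T)$.
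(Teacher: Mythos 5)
Your argument is correct. The paper states this result as a citation (Theorem 2.2.1 of Carl--Stephani) and gives no proof of its own, and what you have written is essentially the standard argument from that reference: the surjectivity of the Kolmogorov numbers under the metric surjection $Q_X$, combined with the identity $a_k=d_k$ for operators defined on a space with the metric lifting property, which $\ell^1(U_X)$ possesses (your lifting construction uses the completeness of $Y$ to sum $\sum_\alpha x_\alpha l_\alpha$, which is covered by the paper's standing assumption that $Y$ is a Banach space in this section).
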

Using this relation, we obtain the following convergence result for Kolmogorov numbers, as a consequence of Theorem \ref{main-convergence-theorem}.

\begin{theorem}\label{Kolmogorov-conv}
Let $Y$ be a dual space, $T\in B(X,Y)$, and let $T_n:=Q_nT$ converges to $T$ in the weak* sense of convergence. Then for each $k\in \mathbb{N}$,
$$\displaystyle\lim_{n\to \infty}d_k(T_n)=d_k(T).$$
\end{theorem}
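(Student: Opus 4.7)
The plan is to reduce the statement to a direct application of Theorem \ref{main-convergence-theorem} by invoking the identity $d_k(S)=a_k(SQ_X)$ from Theorem \ref{carl-kolmogorov}. Specifically, letting $Q_X$ denote the quotient map from $\ell^1(U_X)$ onto $X$, I would rewrite
\begin{eqnarray*}
d_k(T_n)=a_k(T_nQ_X)=a_k(Q_n(TQ_X)) \quad\text{and}\quad d_k(T)=a_k(TQ_X),
\end{eqnarray*}
so it suffices to show $a_k(Q_n(TQ_X))\to a_k(TQ_X)$ as $n\to\infty$.

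The operator $TQ_X$ lies in $B(\ell^1(U_X),Y)$, and I would apply Theorem \ref{main-convergence-theorem} to this operator with the left truncations given by $Q_n$ and right truncations given by the identity on $\ell^1(U_X)$. The hypotheses $\|Q_n\|\|I\|\leq 1$ and $Y$ being a dual space are immediate, so the only point to check is that $Q_n(TQ_X)\to TQ_X$ in the weak$^*$ operator topology on $B(\ell^1(U_X),Y)$. But for any $\xi\in\ell^1(U_X)$ and $z\in Z$ (where $Y=Z^\prime$), setting $x:=Q_X\xi\in X$ yields
\begin{eqnarray*}
|(Q_nTQ_X\xi)(z)-(TQ_X\xi)(z)|=|(Q_nTx)(z)-(Tx)(z)|,
\end{eqnarray*}
and the right-hand side tends to zero by the assumed weak$^*$ convergence $Q_nT\to T$ in $B(X,Y)$.

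With the hypotheses of Theorem \ref{main-convergence-theorem} verified for $TQ_X$, we conclude $a_k(Q_n(TQ_X))\to a_k(TQ_X)$, which is exactly the desired convergence $d_k(T_n)\to d_k(T)$. There is no substantive obstacle; the content of the theorem is the observation that Carl and Stephani's identification of Kolmogorov numbers with approximation numbers on a universal domain $\ell^1(U_X)$ allows one to inherit the convergence behaviour of $a_k$ without any additional structural hypothesis, and that the weak$^*$ convergence passes from $T$ to $TQ_X$ for free.
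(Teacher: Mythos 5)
Your proposal is correct and follows exactly the route of the paper's own proof: rewrite $d_k(T_n)=a_k(T_nQ_X)$ via Theorem \ref{carl-kolmogorov} and apply Theorem \ref{main-convergence-theorem} to the operator $TQ_X$ with left truncations $Q_n$. The only difference is that you spell out the (easy) verification that weak$^*$ convergence of $Q_nT$ to $T$ transfers to $Q_nTQ_X\to TQ_X$, which the paper simply asserts.
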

\begin{proof}
Here $d_k(T)=a_k(TQ_X)$, where $Q_X$ is the quotient map from $\ell^1(U_X)$ on to the space $\ell^1(U_X)/M_0 \equiv X$ (page 52, \cite{carl}). Since $T_nQ_X$ converges to $TQ_X$ in the weak* sense and $Y$ is assumed to be a dual space, by applying Theorem \ref{main-convergence-theorem} to $a_k(Q_nTQ_X)$, we obtain the proof. 
\end{proof}
Regarding Gelfand numbers, it is known that if $Y$ is a Banach space with the \textit{metric extension property} and $X$ is any Banach space, then the Gelfand numbers of operators in $B(X, Y)$ coincide with the corresponding approximation numbers (see Proposition 2.3.3 in \cite{carl} for details). 

For an operator between two general Banach spaces, an expression for Gelfand numbers in terms of the approximation numbers is given in the following theorem. This result utilizes the fact that every Banach space $X$ can be viewed as a subspace of $\ell^{\infty}(U_{X^{\prime}})$, where $U_{X^{\prime}}$ denotes the closed unit ball of the dual space $X^{\prime}$.

\begin{theorem}\label{carl-gelfand}(Theorem 2.3.1, \cite{carl})
Let $J_Y$ be the embedding of $Y$ into $\ell^{\infty}(U_{Y^{\prime}})$ and $T\in B(X,Y)$. Then for each $k\in \mathbb{N}$,
\begin{eqnarray*}
c_k(T)=a_k(J_Y T).
\end{eqnarray*}
\end{theorem}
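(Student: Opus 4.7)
The plan is to establish the equality in two independent steps by inserting the Gelfand number of $J_Y T$ as an intermediate quantity, that is, to prove both $c_k(T)=c_k(J_Y T)$ and $c_k(J_Y T)=a_k(J_Y T)$. The first equality is essentially formal, while the second exploits the special structure of $\ell^{\infty}(U_{Y^{\prime}})$ as an injective Banach space.

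For the first step, the crucial observation is that $J_Y$ is an isometric embedding: indeed, for every $y\in Y$, the Hahn--Banach theorem gives $\|y\|=\sup_{f\in U_{Y^{\prime}}}|f(y)|=\|J_Y y\|_{\infty}$. Hence for every closed subspace $M\subset X$ of finite codimension and the canonical injection $J_M^X$, one has $\|J_Y T J_M^X\|=\|TJ_M^X\|$. Taking the infimum over all $M$ with $\mathrm{codim}(M)<k$ yields $c_k(J_Y T)=c_k(T)$.

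For the second step, I would show that approximation numbers coincide with Gelfand numbers when the codomain is $\ell^{\infty}(U_{Y^{\prime}})$. The inequality $c_k(J_Y T)\leq a_k(J_Y T)$ is immediate from the general fact that every $F\in\mathcal{F}_k(X,\ell^{\infty}(U_{Y^{\prime}}))$ has kernel $M_F$ of codimension at most $\mathrm{rank}(F)<k$, so $\|(J_Y T)J_{M_F}^X\|=\|(J_Y T-F)J_{M_F}^X\|\leq\|J_Y T-F\|$. For the reverse inequality, fix $M\subset X$ with $\mathrm{codim}(M)<k$. Here I would invoke the metric extension property of $\ell^{\infty}(U_{Y^{\prime}})$: coordinate by coordinate, each component of $(J_Y T)J_M^X$ is a bounded scalar functional on $M$, which extends by Hahn--Banach to $X$ without increasing its norm, and assembling these extensions produces an operator $\widetilde{T}\colon X\to \ell^{\infty}(U_{Y^{\prime}})$ with $\widetilde{T}|_M=(J_Y T)|_M$ and $\|\widetilde{T}\|=\|(J_Y T)J_M^X\|$. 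Then $F:=J_Y T-\widetilde{T}$ vanishes on $M$, hence factors through $X/M$, whose dimension is less than $k$, so $F\in\mathcal{F}_k(X,\ell^{\infty}(U_{Y^{\prime}}))$ and $a_k(J_Y T)\leq\|J_Y T-F\|=\|(J_Y T)J_M^X\|$. Taking the infimum over $M$ gives the desired $a_k(J_Y T)\leq c_k(J_Y T)$.

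Combining the two steps yields $c_k(T)=c_k(J_Y T)=a_k(J_Y T)$. The main technical point to be careful about is the coordinate-wise Hahn--Banach extension argument underlying the metric extension property of $\ell^{\infty}(U_{Y^{\prime}})$: one must verify that the extended operator is well-defined and bounded with the correct norm, but this follows because the supremum over coordinates of the extended components equals the operator norm of the original restriction, thanks to norm-preserving functional extensions. Everything else is just the definitions of $c_k$ and $a_k$ combined with the isometric nature of $J_Y$.
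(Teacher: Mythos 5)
The paper does not prove this statement; it is quoted verbatim as Theorem 2.3.1 of Carl--Stephani \cite{carl} and used as a black box. Your argument is correct and is in fact the standard proof from that reference: it splits into the injectivity of the Gelfand numbers under the metric injection $J_Y$ (your Step 1) and the coincidence $c_k=a_k$ for operators into a space with the metric extension property, established via coordinate-wise Hahn--Banach extension (your Step 2, essentially Propositions 2.3.2 and 2.3.3 of \cite{carl}). Both steps are sound, including the two details that matter most: $\mathrm{codim}(\ker F)=\mathrm{rank}(F)<k$ for the easy inequality, and the fact that the assembled extension $\widetilde{T}$ satisfies $\|\widetilde{T}\|=\|(J_YT)J_M^X\|$ because each component functional extends without norm increase and the $\ell^\infty$-norm is the supremum over components. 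Nothing further is needed.
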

Now we show that the codomain of the operator $J_Y$ is indeed a dual space, allowing us to apply Theorem \ref{main-convergence-theorem} to $a_k(J_Y T_n)$. The proof is essentially a repetition of the standard functional analysis techniques used to establish the dual spaces of sequence spaces, specifically applied to $c_{00}(A)$. We include a proof here, as we were unable to find it in this form in the literature.

\begin{lemma}\label{duality of general sequence}
For an arbitrary index set $A$, the dual space of $(c_{00}(A),\|\cdot\|_{1})$ is isometrically isomorphic to $(\ell^{\infty}(A), \|\cdot \|_{\infty})$. 
\end{lemma}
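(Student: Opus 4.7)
The plan is to construct an explicit isometric isomorphism $\Phi: \ell^\infty(A) \to (c_{00}(A), \|\cdot\|_1)^\prime$ and verify its properties, mirroring the classical proof of the $\ell^1$-$\ell^\infty$ duality but being careful with the fact that $A$ may be uncountable. For each $y = (y_\alpha)_{\alpha \in A} \in \ell^\infty(A)$, I would define $\Phi(y) = f_y$ by
\begin{equation*}
f_y(x) = \sum_{\alpha \in A} y_\alpha x_\alpha, \quad x = (x_\alpha) \in c_{00}(A).
\end{equation*}
The sum is actually finite (only finitely many $x_\alpha$ are nonzero by definition of $c_{00}(A)$), so $f_y$ is well defined, and linearity in $x$ is immediate.

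Next I would check boundedness and compute the norm. The estimate
\begin{equation*}
|f_y(x)| \leq \sum_{\alpha \in A} |y_\alpha|\,|x_\alpha| \leq \|y\|_\infty \|x\|_1
\end{equation*}
gives $\|f_y\| \leq \|y\|_\infty$. For the reverse inequality, I would test $f_y$ on the standard basis vectors $e_\beta \in c_{00}(A)$ (having a $1$ in position $\beta$ and $0$ elsewhere), which satisfy $\|e_\beta\|_1 = 1$. Choosing scalars of modulus one appropriately, $|f_y(\lambda_\beta e_\beta)| = |y_\beta|$ for each $\beta$, yielding $\|f_y\| \geq \sup_\beta |y_\beta| = \|y\|_\infty$. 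Hence $\Phi$ is a linear isometry.

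For surjectivity, given any $f \in (c_{00}(A), \|\cdot\|_1)^\prime$, I would define $y_\alpha := f(e_\alpha)$ for each $\alpha \in A$. Since $|y_\alpha| \leq \|f\|\|e_\alpha\|_1 = \|f\|$, the family $y := (y_\alpha)$ lies in $\ell^\infty(A)$ with $\|y\|_\infty \leq \|f\|$. Any $x \in c_{00}(A)$ has finite support, say on a finite set $\tilde{A} \subset A$, so $x = \sum_{\alpha \in \tilde{A}} x_\alpha e_\alpha$, and linearity of $f$ gives $f(x) = \sum_{\alpha \in \tilde{A}} x_\alpha y_\alpha = f_y(x)$. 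Thus $f = \Phi(y)$, proving surjectivity.

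The only subtlety, and thus the main point to handle carefully, is the arbitrariness of $A$: one must avoid any implicit use of countability, ensure all sums appearing in the definition of $f_y$ are genuinely finite (guaranteed by the finite-support condition defining $c_{00}(A)$), and verify that the supremum defining $\|y\|_\infty$ is achieved in the limiting sense through the $e_\beta$'s ranging over the full index set $A$. Since every step above respects arbitrary cardinality, combining them gives that $\Phi$ is the required isometric isomorphism.
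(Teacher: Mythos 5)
Your proof is correct and is essentially the paper's own argument, just with the isometric isomorphism written in the opposite direction: you map $\ell^{\infty}(A)\to (c_{00}(A),\|\cdot\|_1)^{\prime}$ via $y\mapsto f_y$, while the paper maps $f\mapsto (f(\chi_\alpha))$ and then exhibits your $f_y$ to prove surjectivity. The substance — testing on the unit vectors $e_\beta=\chi_\beta$, exploiting finite support to keep all sums finite, and the two-sided norm estimate — is identical.
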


\begin{proof}
For each $\alpha\in A$, consider the element $\chi_{\alpha}\in c_{00}(A)$ defined by

\begin{eqnarray*}
\chi_{\alpha}(\beta)=
\begin{cases}
1 \mbox{ if }  \beta = \alpha\\
0 \mbox{ if } \beta \neq \alpha,
\end{cases}
\end{eqnarray*}
for all $\beta \in A$. Then $\|\chi_{\alpha}\|_1=1$, and for each $f \in {c_{00}}^{\prime}(A)$, the family $(f(\chi_{\alpha})) \in \ell^{\infty}(A)$, since $|f(\chi_{\alpha})|\leq \|f\|$ for $\alpha \in A$. We define a linear map $T: {c_{00}}^{\prime}(A) \to \ell^{\infty}(A)$ defined by
\begin{eqnarray*}
T(f)=(f(\chi_{\alpha})),\quad f\in {c_{00}}^{\prime}(A),
\end{eqnarray*}
and observe that this is an isometrical isomorphism.
To see that $T$ is onto, for $(y_{\alpha})\in \ell^{\infty}(A)$, consider the map $f$ given by $$f((x_{\alpha}))=\displaystyle\sum_{\alpha \in A}x_{\alpha}y_{\alpha} \mbox{ for } (x_{\alpha})\in c_{00}(A).$$
Then $f\in {c_{00}}^{\prime}(A)$ and $Tf=(y_{\alpha})$. Now to see that $T$ is an isometry, observe that 
$$\|f\|=\displaystyle\sup\{|f((x_{\alpha}))|:\, (x_{\alpha})\in c_{00}(A),\, \|(x_{\alpha})\|_1=1\}\geq |f(\chi_{\beta})|$$
for each $\beta \in A$, showing that 
$\|f\|=\displaystyle\sup_{\alpha \in A}|f(\chi_{\alpha})|=\|(f(\chi_{\alpha}))\|_{\infty}=\|Tf\|_{\infty}$.
\end{proof}
By applying Theorem \ref{main-convergence-theorem} to the approximation numbers of $J_Y T$ mentioned in Theorem \ref{carl-gelfand}, we obtain a convergence result for Gelfand numbers of operators between Banach spaces. It is important to note that, in this case also, there is no need to impose separability restrictions on the codomain space of the operator.

\begin{theorem}\label{Gelfand-conv}
Let $T\in B(X,Y),\, T_n=TQ_n$ for $n\in N$ and $T_n \to T$ in the weak* sense of convergence. Then for each $k\in \mathbb{N}$,
\begin{eqnarray*}
\displaystyle\lim_{n\to \infty}c_k(T_n)=c_k(T).
\end{eqnarray*}
\end{theorem}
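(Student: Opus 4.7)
The plan is to reduce Theorem \ref{Gelfand-conv} to Theorem \ref{main-convergence-theorem} via the identity $c_k(T) = a_k(J_Y T)$ provided by Theorem \ref{carl-gelfand}, after observing that the codomain $\ell^{\infty}(U_{Y^{\prime}})$ of the embedding $J_Y$ is itself a dual space (namely of $c_{00}(U_{Y^{\prime}})$, by Lemma \ref{duality of general sequence}) and therefore meets the hypothesis of Theorem \ref{main-convergence-theorem}. This is exactly the feature that lets us avoid the separability assumption on $Y$, mirroring the approach used in Theorem \ref{Kolmogorov-conv}.

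First, I would rewrite $J_Y T_n$ as a two-sided truncation of $J_Y T$ by taking $\tilde{Q}_n$ to be the identity on $\ell^{\infty}(U_{Y^{\prime}})$ and $\tilde{P}_n$ to be the operator in $B(X)$ producing $T_n$, so that $\|\tilde{Q}_n\|\|\tilde{P}_n\|\leq 1$ automatically. The task then reduces to showing that $\tilde{Q}_n (J_Y T) \tilde{P}_n = J_Y T_n$ converges to $J_Y T$ in the weak* operator topology of $B(X, \ell^{\infty}(U_{Y^{\prime}}))$, where $\ell^{\infty}(U_{Y^{\prime}})$ is paired with the predual $c_{00}(U_{Y^{\prime}})$ singled out in Lemma \ref{duality of general sequence}.

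The core step is verifying this weak* convergence. For each $x \in X$ and each $\phi = (\phi_f)_{f \in U_{Y^{\prime}}} \in c_{00}(U_{Y^{\prime}})$, the pairing acts on $y \in Y$ by $(J_Y y)(\phi) = \sum_{f} \phi_f\, f(y)$, a sum with only finitely many nonzero terms since $\phi$ has finite support. Hence
\[
(J_Y T_n x)(\phi) - (J_Y T x)(\phi) = \sum_{f \in \mathrm{supp}(\phi)} \phi_f\bigl[f(T_n x) - f(T x)\bigr],
\]
which is a \emph{finite} linear combination of scalars of the form $f(T_n x) - f(T x)$. The hypothesized convergence $T_n \to T$, once translated through the embedding $J_Y$ into the ambient dual space $\ell^{\infty}(U_{Y^{\prime}})$, amounts precisely to the assertion $f(T_n x) \to f(T x)$ for all $x \in X$ and all $f \in Y^{\prime}$, so each such term vanishes in the limit.

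With that weak* convergence established, Theorem \ref{main-convergence-theorem} applied to $J_Y T$ and its truncations yields $a_k(J_Y T_n) \to a_k(J_Y T)$ for every $k \in \mathbb{N}$, and invoking Theorem \ref{carl-gelfand} on both sides converts this into $c_k(T_n) \to c_k(T)$, which is what we wanted. I expect the main subtlety to be the bookkeeping in the previous paragraph: one must be careful that the weak* convergence is taking place in $B(X, \ell^{\infty}(U_{Y^{\prime}}))$ with $\ell^{\infty}(U_{Y^{\prime}}) \cong c_{00}(U_{Y^{\prime}})^{\prime}$, and that it genuinely follows from the hypothesis on $T_n$ when interpreted through $J_Y$. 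Once that identification is made explicit via Lemma \ref{duality of general sequence}, everything else is a direct application of previously established results.
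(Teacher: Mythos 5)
Your proposal is correct and follows essentially the same route as the paper: reduce to approximation numbers via $c_k(T)=a_k(J_YT)$, observe that $\ell^{\infty}(U_{Y^{\prime}})$ is a dual space by Lemma \ref{duality of general sequence}, and apply Theorem \ref{main-convergence-theorem}. Your explicit verification that $J_YT_n\to J_YT$ in the weak$^*$ topology induced by the predual $c_{00}(U_{Y^{\prime}})$ (reducing it to $f(T_nx)\to f(Tx)$ via finitely supported test elements) is a detail the paper leaves implicit, and it is a welcome addition rather than a departure.
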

\begin{proof}
For each $n\in\mathbb{N}$, we have $c_k(T_n) = a_k(J_YT_n)$ by Theorem \ref{carl-gelfand}. Now the codomain of $J_Y T$ is $\ell^{\infty}(U_{Y^{\prime}})$, which is a dual space by Lemma \ref{duality of general sequence}. Hence by applying Theorem \ref{main-convergence-theorem} to $a_k(J_YT)$, we get the conclusion.
\end{proof}

\begin{remark}{\rm
Note that, in general, the space $\ell^{\infty}(U_{Y^{\prime}})$ is not the dual space of any separable space. This highlights the advantage of the result given in Theorem \ref{main-convergence-theorem} of this article over Theorem 3.3 of \cite{dmk}.}
\end{remark}  

We now provide an example to illustrate Theorem \ref{Kolmogorov-conv}.

\begin{example}{\rm
Consider the identity operator $I:\ell^1 \to \ell^1$. From Lemma 11.6.7 of \cite{pie}, we have $d_k(I)=1$ for all $k\in \mathbb{N}$.
Now consider the standard projections $Q_n:\ell^1 \to \ell^1$ given by 
$$Q_n(x_1, x_2, \ldots)=(x_1, x_2, \ldots, x_n, 0, 0, \ldots),\,\,(x_1, x_2, \ldots)\in \ell^1.$$ Then $Q_n \to I$ in the pointwise sense. Now from Theorem 11.11.3  of \cite{pie}, we get $$d_k(Q_n)=1\mbox{ for all }n\geq k.$$
Thus $d_k(T_n)\to d_k(T)$ as $n\to \infty$, where $T=I$ and $T_n=Q_nT$ for $n\in \mathbb{N}$.}
\end{example}
\section{Concluding remarks and potential research problems}
{Although the convergence problem has been addressed for approximation numbers, it is not known if the result in Theorem \ref{main-convergence-theorem} holds when the norm restrictions on $P_n$ and $Q_n$ are removed. This is a significant case to consider, as in Banach spaces, it is not always possible to obtain operators of norm $1$ or less that converge to the identity operator. Also, for the other $s$-numbers, only partial results were achieved by applying Theorem \ref{main-convergence-theorem}, as the truncations considered in the results here are not two sided. It would be of considerable interest to obtain similar convergence results for other $s$-numbers, either by modifying Theorem \ref{main-convergence-theorem} or through alternative approaches.} 
\vskip0.25cm

\noindent \textbf{ACKNOWLEDGEMENTS.}

I express my sincere gratitude to my PhD supervisors Prof. S. H. Kulkarni and Prof. M. T. Nair for their invaluable suggestions during the earlier phases of this research work. 

\end{document}